\renewenvironment{proof}[1][Proof]{\noindent\textit{#1. } }{\hfill$\square$}
\newtheoremstyle{theorem}{6pt}{6pt}{\rm}{}{\sffamily}{ }{ }{}
\theoremstyle{theorem}
\newtheoremstyle{lemma}{6pt}{6pt}{\rm}{}{\sffamily}{ }{ }{}
\theoremstyle{lemma}
\newtheoremstyle{example}{6pt}{6pt}{\rm}{}{\sffamily}{ }{ }{}
\theoremstyle{example}
\newtheoremstyle{corollary}{6pt}{6pt}{\rm}{}{\sffamily}{ }{ }{}
\theoremstyle{corollary}
\newtheoremstyle{definition}{6pt}{6pt}{\rm}{}{\sffamily}{ }{ }{}
\theoremstyle{definition}
\newtheoremstyle{remark}{6pt}{6pt}{\rm}{}{\sffamily}{ }{ }{}
\theoremstyle{remark}
\newtheoremstyle{approximation}{6pt}{6pt}{\rm}{}{\sffamily}{ }{ }{}
\theoremstyle{approximation}
\newtheoremstyle{scheme}{6pt}{6pt}{\rm}{}{\sffamily}{ }{ }{}
\theoremstyle{scheme}
\title{On the restriction of various Laplace operators on submanifolds}
\author{
   Jukka Tuomela \\
   Department of Physics and Mathematics\\
   University of Eastern Finland\\
   P.O. box 111, FI-80101 Joensuu, Finland  \\
   \text{jukka.tuomela@uef.fi} \\
}
\begin{document}

\maketitle

\begin{abstract}
When considering Navier-Stokes equations on Riemannian manifolds one frequently encounters situations where the manifold is embedded in the ambient Euclidean space. In this context it is interesting to investigate what is the precise relationship of the diffusion operator in the ambient space to the diffusion operator on the manifold. The present paper gives a precise characterization of this situation for general  surfaces in three dimensional space. 
\end{abstract}

\textbf{\emph Mathematics Subject Classification (2020)} 35Q30, 53Z05, 58J99, 76D05

\keywords{Navier-Stokes equations, Laplace operators, Riemannian manifolds, moving frames}


\section{Introduction}
In recent years there has been a growing interest on analysing Navier-Stokes systems on Riemannian manifolds, see for example \cite{chczdi,fang,kobayashi,mj} and many references therein. An important issue in the analysis has turned out to be the choice of an appropriate diffusion operator. As discussed in \cite{czubak,mj} different choices can lead to very different solutions. It appears that it is at present somewhat unclear how to choose a correct diffusion operator in a concrete physical model.

In the present article we will discuss a related problem. Often we have a three dimensional flow, but we would like to study its restriction to some invariant submanifold. For example in the large scale atmospheric flow we could be interested in the flow on some (fictitious) surface. Then whatever diffusion operator one chooses, one would like to see how the three dimensional operator on the ambient space and the corresponding two dimensional operator on the relevant surface are related. In fact it is sufficient to analyse the Bochner Laplacian because other diffusion operators can be expressed in terms of it as will be explained below. Hence our results apply equally well for all diffusion operators. 

In \cite{chczyo} this problem was solved in the case of ellipsoid, and the present paper can be understood as a generalization of it. Hence we refer to \cite{chczyo} and its references for more background on this problem. In this paper we compute the restriction of Bochner Laplacian (and hence of any diffusion operator) to an arbitrary surface in three dimensional space. In fact the ambient space need not be $\mathbb{R}^3$; it can be any three dimensional Riemannian manifold. This is because the technical tool which we use, namely adapted moving frames (see \cite{spivak3}), works in the same way also in this general context. We compute also the normal component of the Bochner Laplacian. As far as we know the normal component has not been considered previously, but one would think that it could also be of some interest. 

Our approach is thus differential geometric. There has also been articles from the functional analysis point of view. One of the first papers in this direction for curved surfaces is \cite{temzia}; see also \cite{fang,miura,prsiwi} for more recent works. Perhaps combining more closely differential geometric and functional analytic ideas could lead to a better understanding of the restriction problem. 

In section 2 we introduce the necessary tools of Riemannian geometry  and discuss how Navier-Stokes equations are formulated on Riemannian manifolds. In section 3 we consider the problem treated in \cite{chczyo} in some detail because analyzing it in our framework gives some insights about the general case. In section 4 various types of extension/restriction are discussed, and it is seen that by the moving frame approach one can easily prove the existence of various types of extensions. In section 5 the main results are stated and proved. The results show that in general the diffusion operator restricted to the submanifold is rather unrelated to the diffusion operator on the submanifold.  Our results, however, give  precise formulas for the restriction which makes it then possible to study under which conditions there is a reasonable relationship between them.

\section{Preliminaries}

\subsection{Riemannian geometry }
Let us first recall  some facts about Riemannian geometry    which are needed later on; for details we refer to \cite{spivak2,spivak3}. Einstein summation convention is used whenever convenient. 
Let $M$ be a smooth Riemannian manifold with metric $g$.  For a function $f\,:\,M\to\mathbb{R}$ its gradient is $\mathsf{grad}(f)=g^{ij}f_{;j}$ and the divergence of the vector field $u$ is $\mathsf{div}(u)=u^i_{;i}$. The Ricci tensor is denoted by $\mathsf{Ri}$.  In order to simplify the notations we use the standard convention that when the indices are raised or lowered the notation does not change. Hence $u$ can denote both covariant and contravariant vector and  for example in two dimensional case we can write $\mathsf{Ri}=\kappa g$ where $\kappa$ is the Gaussian curvature and also $\mathsf{Ri}\,v=\kappa \,v$ where $v$ is a vector field. If the clarity requires to specify the type precisely then we use the musical isomorphisms $\sharp$ and $\flat$.

Let $M_0$ be a three dimensional Riemannian manifold with Riemannian metric $g_0$ and covariant derivative $\overline{\nabla}$. Let then $M$ be a two dimensional manifold with an embedding $f\,:\,M\to M_0$ and as usual we identify $M$ and $f(M)$ when convenient. The Riemannian metric on $M$ is $g=f^\ast g_0$ and the covariant derivative is denoted by $\nabla$. 

Let us now introduce an adapted orthonormal frame $\{b^1,b^2,b^3\}$ in the neighborhood of $M$, i.e. $b^1$ and $b^2$ are tangent along $M$ and $b^3$ is the unit normal. Note that globally such a frame may not exist, but since all the analysis that follows is local we only need a local frame which always exists. For simplicity of notation we will not indicate the domain  where the frame is defined. 
Then the dual frame is given by $\theta^j=\flat b^j$ and the associated connection
forms are the one forms $\omega_{ij}$ which satisfy the equations
\begin{align*}
    \omega_{ij}=&-\omega_{ji}\\
    d\theta^i=&-\sum_k \omega_{ik}\wedge \theta^k \ .
\end{align*}
Note that  $  \omega_{13}(b^2)=\omega_{23}(b^1)$.  Similarly we have  also an orthonormal frame $\{b^1_z,b^2_z\}=\{f^\ast b^1,f^\ast b^2\}$,   $\theta_z^j=\flat b_z^j$ and the connection forms $\omega_{ij}^z$ on $M$. The covariant derivative and connection forms are related by the formula
\begin{equation}
  \overline{\nabla}_{b^k}b^j=\sum_i \omega_{ij}(b^k)b^i\ .
\label{kova}
\end{equation}
Then we have the curvature forms $\Omega_{ij}$ defined by
  \begin{equation}
      d\omega_{ij}=-\sum_k \omega_{ik}\wedge
    \omega_{kj}+\Omega_{ij}\ .
  \label{kaare}
  \end{equation}
As the name suggests $\Omega_{ij}=0$ if $M_0$ is flat. The volume form of $M_0$, denoted $\Xi_0$, is given by $\Xi_0=\theta^1\wedge \theta^2\wedge\theta^3$ and similarly $\Xi=\theta_z^1\wedge \theta_z^2$ is the volume form on $M$.
On $M$ we have $d\omega_{12}^z=\Omega_{12}^z=\kappa\, \Xi$ and hence $\Omega_{12}^z(b_z^1,b_z^2)=\kappa$. 

Let then $X$ and $Y$ be vector fields on $M_0$ which are tangent along $M$. Then one can write
\begin{equation}
      \overline \nabla_{X}Y=
  \nabla_{X}Y+S(X,Y)b^3\ .
\label{toinen-perus}
\end{equation}
These are known as Gauss formulas and the  symmetric tensor $S$ is the second fundamental form. If $X$ is a vector field tangent along $M$, then
 \begin{equation}
    \overline{\nabla}_{X}b^3=-S\,X= \omega_{13}(X)b^1+
  \omega_{23}(X)b^2\ .
\label{wein}
\end{equation}
The first equality is known as Weingarten's equation. 
It will be convenient to define the functions $t_{ij}$ by
 \begin{align*}
    t_{11}=&S(b^1,b^1)=-\omega_{13}(b^1)
    &   t_{22}=&S(b^2,b^2)=-\omega_{23}(b^2)\\
        t_{12}=&S(b^1,b^2)=-\omega_{13}(b^2)
        =-\omega_{23}(b^1)
          \ .
 \end{align*}
If we consider the equality  $ t_{ij}=S(b^i,b^j)$ then one thinks that the functions $t_{ij}$ are defined on $M$ and in this case one could also write $ t_{ij}=S(b_z^i,b_z^j)$  ; however, the equality $ t_{ij}=-\omega_{i3}(b^j)$ allows one to interpret $t_{ij}$ as functions in the neighborhood of $M$. It turns out that the adjugate of $S$ appears in the formulas below so we introduce
\[
 S_{\mathsf{adj}}=
 t_{22}\theta^1\otimes \theta^1-
 t_{12}\theta^1\otimes \theta^2-
 t_{12}\theta^2\otimes \theta^1+
 t_{11}\theta^2\otimes \theta^2\ .
\]

It is also convenient to introduce the following notations:
\begin{equation}
    \alpha_j=\omega_{j3}(b^3)
    \quad\mathrm{and}\quad
    \gamma_j=\omega_{12}(b^j)\ .
\label{merk}
\end{equation}
Again depending on the context these can be interpreted as functions on $M$ or in the neighborhood of $M$. 
 Then we have
\begin{equation}
     \kappa=t_{11}t_{22}-t_{12}^2
 \quad\mathrm{and}\quad
\mathcal{H}=\tfrac{1}{2}\,
\big(t_{11}+t_{22}\big)
\label{kaarevuudet}
\end{equation}
where $\mathcal{H}$ is the mean curvature. Note that the sign of $\mathcal{H}$ depends on the choice of orientation of the normal field. Finally we need the formula
\begin{equation}
  d\omega(X,Y)=X(\omega(Y))-Y(\omega(X))-\omega\big([X,Y]\big)
  \label{omega-kaava}
\end{equation}
where $\omega$ is a one form and $X$ and $Y$ are vector fields.

On some occasions we will refer to ''standard results in PDE''; this means that the corresponding statements can easily be found for example  in \cite{evans}. When computing concrete examples we have used the \emph{Differential Geometry} package of {\sc Maple}.\footnote{\textsf{https://www.maplesoft.com/}}

\subsection{Navier-Stokes equations}

 We can write the incompressible Navier-Stokes equations on Riemannian manifolds as follows:
 \begin{equation}
 \begin{aligned}
   & u_t-Au+\nabla_u u+\mathsf{grad}(p)=0\\
    &\mathsf{div}(u)=0
\end{aligned}
\label{ns}
 \end{equation}
Here $A$ is some diffusion operator, and as discussed in \cite{czubak,mj}, there are basically three candidates for $A$; namely Bochner Laplacian, Hodge Laplacian and symmetric Laplacian. Let us first set
\[
   \Sigma u     =g^{ki}u^j_{;i}+g^{ij}u^k_{;i}\ .
\]
Then the various Laplacians are
\begin{align*}
    \Delta_Bu=&g^{ij}u^k_{;ij}\\
    \Delta_H u=&-\sharp (d\delta+\delta d)\flat u=
       \Delta_B u-\mathsf{Ri}(u)\\
       Lu=&\mathsf{div}(\Sigma u)=
       \Delta_B u +\mathsf{grad}(\mathsf{div}(u))+\mathsf{Ri}(u)
\end{align*}
Here  $\delta$ is the codifferential and we recall that for vector fields $\mathsf{div}(u)=-\delta\flat u$. It is seen that for flat manifolds Hodge Laplacian is the same as Bochner Laplacian, and if in addition $u$ is divergence free, then the symmetric Laplacian is also equal to them. Note that for divergence free vector fields all diffusion operators can be written as
\[
  Au=\Delta_B u+\beta\,\mathsf{Ri}(u)
\]
where $\beta=\pm 1$ or $\beta=0$. Let us then recall that
\begin{itemize}
    \item[(i)] $u$ is parallel, if $\nabla u=0$. Parallel fields are solutions to \eqref{ns} with $\beta=0$ and $p$ constant. 
    \item[(ii)] $u$ is harmonic, if $\Delta_H u=0$. Divergence free harmonic fields are solutions to \eqref{ns} with $\beta=-1$ and $p=-\tfrac{1}{2}\,g(u,u)$. 
    \item[(iii)] $u$ is Killing, if $\Sigma u=0$. Killing fields are solutions to \eqref{ns} with $\beta=1$ and $p=\tfrac{1}{2}\,g(u,u)$. 
\end{itemize}
As discussed in \cite{czubak,mj} different choices of $\beta$ can produce completely different solutions.  In some sense this is very natural, or at least intuitively plausible, because essentially choosing $\beta=-1$ corresponds to choosing antisymmetric part of $\Delta_B u$ while $\beta=1$ corresponds to choosing symmetric part of $\Delta_B u$.  So different choices of $\beta$ correspond to different physical models.  From the point of view of continuum mechanics $L$ would seem a natural choice, but perhaps in some situations other choices are appropriate. One might speculate that maybe in addition to above three values other values of $\beta$ could be relevant in some situations, or that $\beta$ might even be a function. Probably one should in any case require that $|\beta|\le 1$ since this condition guarantees that $-A$ is positive definite for divergence free vector fields.

\section{Restriction problem}
Our goal is to generalize the treatment of the restriction problem given in \cite{chczyo}. Let us first recall the setting of the problem in \cite{chczyo}. Let $D_x\subset \mathbb{R}^3$ be some convenient domain with the standard Riemannian metric $g_1$ and let us denote the coordinates of $D_x$ by $x$. Then let  $D_y\subset \mathbb{R}^3$ be some other domain with coordinates $y$ and let $\psi\,:\,D_y\to D_x$ be a diffeomorphism. Finally let $D_z\subset\mathbb{R}^2$ be a two dimensional domain with coordinates $z$ and let $f\,:\,D_z\to D_y$ be an embedding. Since all the analysis that follow is local it is not necessary to explicitly specify various domains. The metrics in $D_y$ and $D_z$ are obtained by pullbacks: $g_0=\psi^\ast g_1$ and $g= f^\ast g_0$.

 Of course one may view that $D_y$ and $D_x$ are same as manifolds, and $y$ and $x$ are merely two different coordinate systems. Indeed below when we consider the general case the ''intermediate'' coordinates are not needed. However, when computing concrete examples it is convenient to specify the problem in this way. 

Now we would like to compare the Laplacian defined on $D_y$ and the Laplacian on $M=f(D_z)$. Note that for divergence free vector fields the only difficulty is the Bochner part of the operator, so it is sufficient to analyse $\Delta_B$. Let $u$ be a vector field on $D_y$ which is tangent to $M$. Hence there is a vector field $v$ on $M$ such that $v=u$ on $M$. We want to compare $\Delta_Bu$ and $\Delta_B v$ somehow. The problem is that while  $\Delta_B v$ is a vector field on $M$ by definition, $\Delta_B u$ is not. 

Now let $p\in M$ and consider $T_pM$ as subspace of $T_pD_y$; then we can introduce the orthogonal projection $\pi\,:\, T_pD_y\to T_pM$ and we could compare $\Delta_B v$ and $\pi\big(\Delta_B u\big)$. Note that in \cite{chczyo} authors regarded $\Delta_B u$ as a one form and then pulled it back to $M$; however, this is equivalent to projection since if $X$ is any vector field then
\[
  f^\ast \flat\, \pi(X)= f^\ast \flat\, X\ .
\]
Let $\{b^1,b^2,b^3\}$ be an adapted orthonormal frame  and let $\{b_z^1,b_z^2\}$ be the corresponding   orthonormal frame on $M$. Now we set
\begin{equation}
    u=u^1b^1+u^2b^2+u^3b^3  
\quad\mathrm{and}\quad    
     v=v^1b_z^1+v^2b_z^2=f^\ast\big(  u^1b^1+u^2b^2\big)\ .
\label{kent}
\end{equation}
The derivatives of $u^k$ are denoted by subscripts:  $u^k_1=\partial u^k/\partial y_1$ etc. 
Since $u$ should be tangent to $M$ we require that $u^3$ is identically zero on $M$. Let us now consider the explicit problem in \cite{chczyo} before considering the general case. We feel that this is useful since it gives a good idea of the more complicated computations of the general case, and moreover it gives some indications about what the general result should look like. 

The following maps were chosen in \cite{chczyo}:
\begin{equation}
\begin{aligned}
    \psi(y)=&\big( ay_3\cos(y_1)\sin(y_2),  ay_3\sin(y_1)\sin(y_2), 
    y_3\cos(y_2)\big)\\
f(z)=&\big(z_1,z_2,1\big)
\end{aligned}
    \label{esim}
\end{equation}
Here $a$ is some positive constant. Then setting $\lambda=\sqrt{a^2\cos(y_2)^2+\sin(y_2)^2}$ we can choose for example the following frames:
\begin{equation}
\begin{aligned}
    b^1=&\frac{1}{ay_3\sin(y_2)}\,\partial_{y_1}&
    b^2=& \frac{1}{y_3\lambda}\,\partial_{y_2}&
    b^3=& \frac{(1-a^2)\sin(2y_2)}{2ay_3\lambda}\partial_{y_2}+
    \frac{\lambda}{a}\,\partial_{y_3}\\
    b_z^1=&\frac{1}{a\sin(z_2)}\,\partial_{z_1}&
    b_z^2=& \frac{1}{\lambda}\,\partial_{z_2}
\end{aligned}
   \label{kehys} 
\end{equation}
We will denote also the pullback $f^\ast \lambda$ by $\lambda$. 
Then the computations with formulas \eqref{toinen-perus} and \eqref{kaarevuudet} give
\[
\kappa=\frac{1}{\lambda^4}
\quad\mathrm{and}\quad
2\mathcal{H}=-\frac{1}{a\lambda}-
\frac{a}{\lambda^3} .
\]
The connection forms are
\begin{align*}
     \omega_{12}=& \frac{\cos(y_2)}{y_3\lambda\sin(y_2)}\,\theta^1&
       \omega_{13}=& \frac{1}{a y_3\lambda}\,\theta^1\\
         \omega_{23}=&
          \frac{a}{y_3\lambda^3}\,\theta^2+
          \frac{(1-a^2)\sin(2y_2)}{2y_3\lambda^3}\,\theta^3\ .
\end{align*}
Then we compute 
\[
  \pi\big(\Delta_B u\big)=c_1b^1+c_2b^2
  \quad\mathrm{where}\quad
  c_j=g_0(\Delta_Bu,b^j)\ .
\]
Now $c_j$ contain derivatives with respect to $y_3$ which are not meaningful intrinsically in $M$. Putting $\hat \lambda=\sqrt{a^2\sin(y_2)^2+\cos(y_2)^2}$ we have
\begin{align*}
 c_1=& \frac{1}{ a^{2} \sin \! (y_2 )^{2}}\,u^1_{11}+ 
\frac{\hat\lambda^2}{a^{2} }\,u^1_{22}+ 
\frac{\left(1-a^{2}\right) \sin \! \left(2y_2 \right) }{a^{2} }\,u^1_{23}+ \\
&\frac{\lambda^2}{a^{2}}\,u^1_{33}+\mathrm{lower\ order\ terms}  \,.
\end{align*}
The terms in $c_2$ are similar except that there are second order derivatives of $u^2$. 
So the goal is to somehow identify the terms containing derivatives with respect to $y_3$, and then see what is left. With the benefit of hindsight one might argue as follows: in $c_j$ there are at most second order derivatives, and all information about these is contained in the tensor $\overline{\nabla}\,\overline{\nabla} u$. However, we want to eliminate the derivatives in the normal direction so somehow it would be natural to consider $\overline{\nabla}\,\overline{\nabla} u\, b^3\otimes b^3$. Then we compute that 
\begin{align*}
  \pi\big(\overline{\nabla}\,\overline{\nabla} u\, b^3\otimes b^3\big)=&d_1b^1+d_2b^2
    \quad\mathrm{where}\quad
  d_j=g_0(\overline{\nabla}\,\overline{\nabla} u\, b^3\otimes b^3,b^j)
  \\ 
  c_1-d_1=  &\frac{\sin^{2}\left(2y_{2} \right) \left(a^2 -1\right)^{2} }{4\lambda^2 a^{2} }\,u^1_3+\mathrm{other\ terms}
\\
  c_2-d_2= &\frac{\sin^{2}\left(2y_{2} \right) \left(a^2 -1\right)^{2} }{4\lambda^2  a^{2} }\,u^2_3+\mathrm{other\ terms}
\end{align*}
where the other terms do not contain derivatives with respect to $y_3$. Hence the ''offending'' second order derivatives in $c_j$ have disappeared.  Somewhat unexpectedly the coefficients of the main terms in $c_j-d_j$  are equal. To get rid of these terms we try the Lie bracket;  this gives
\begin{align*}
\pi\big(\big[u, b^3\big] \big)=&e_1b^1+e_2b^2\\   
\hat v=&(c_1-d_1-2\mathcal{H}\, e_1)b^1+
(c_2-d_2-2\mathcal{H}\,e_2)b^2
\end{align*}
Now $\hat v$ contains only terms which make sense on $M$ so that we can consider the corresponding vector field on $M$, still denoted by $\hat v$. 
Of course it was not a priori clear that one must multiply  $e_j$ by   $2\mathcal{H}$, but once one tries the Lie bracket then it is easy to compute that this is the right choice. Now we can compare $\hat v$ to Bochner Laplacian, and simply computing we obtain that
\[
  \hat v=\Delta_B v+\kappa v\ .
\]
Let us summarize the results so far. In the problem specified by \eqref{esim} we have obtained using the adapted frame \eqref{kehys} that 
\begin{equation}
\pi\big(\Delta_B u\big)=
\Delta_B v+\kappa v+
\pi\Big(\overline{\nabla}\,\overline{\nabla} u\,b^3\otimes b^3+2\mathcal{H}[u,b^3]\Big)\ .
\label{tulos}
\end{equation}
It is curious that both Gaussian and mean curvature appear in the result. Now note that everything in the formula \eqref{tulos} makes sense on an arbitrary surface, so it is natural to assume that the result holds in general. This is indeed  almost true and one could actually prove it directly in a purely computational way, following the same steps as above. The word almost appears because in the general case there are terms which are automatically zero for the problem \eqref{esim}.  Anyway the symmetric Laplacian for divergence free fields $Lv=\Delta_B v+\kappa v$ will always be present.

However, it seems that it would be better give a more conceptual proof, and moreover it would be interesting to compute also the normal component. Also note that the properties of $\mathbb{R}^3$ were not really used; all computations were done in terms of the moving frame and these are  possible also when the ambient manifold is an  arbitrary three dimensional Riemannian manifold. Recall that all the tools introduced in section 2.1 are valid in this general case.
So instead of proving the formula \eqref{tulos} directly let us now move on to the general case.

\section{General formulation of the problem} 
Now we use the tools and notations outlined in section 2 to formulate the problem in the general case. Let us introduce vector fields $u$ and $v$ as in \eqref{kent}
such that $u^3$ is identically zero on $M$. Before analyzing the decomposition of $\Delta_Bu$ let us first discuss the divergence, bracket and curl. 

In the framework of our problem we may view $u$ as an extension of $v$, or $v$ as a restriction of $u$. Let us consider how the properties of $u$ and $v$ are related. Now in the flow problems the important situation is where the flow is incompressible and it would be natural to require that both $u$ and $v$ are incompressible. This is convenient to consider as an extension problem: given an incompressible flow $v$ can we find an incompressible extension $u$? In fact an even more general result is true. 
\begin{theorem}
Let $v$ be a vector field on $M$. Then there is a vector field $u$ which extends $v$ such that  
$\mathsf{div}(u)$ restricted to $M$ is equal to $\mathsf{div}(v)$.  
\label{div-lause}
\end{theorem}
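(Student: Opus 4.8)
The plan is to write both divergences in the adapted frame and compare them on $M$. Since the divergence is the trace of the covariant derivative, it can be evaluated in any orthonormal frame, so using \eqref{kova} I would first establish the ambient frame expression
\[
\mathsf{div}(u)=\sum_k b^k(u^k)+\sum_{j,k}u^j\,\omega_{kj}(b^k)\ ,
\]
together with the analogous two dimensional formula $\mathsf{div}(v)=b_z^1(v^1)+b_z^2(v^2)-v^1\omega_{12}^z(b_z^2)+v^2\omega_{12}^z(b_z^1)$ on $M$, where only $\omega_{12}^z$ survives because $M$ is a surface.

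Next I would restrict the ambient formula to $M$. Three facts do the work. First, since $b^1,b^2$ are tangent along $M$ and $v^j=f^\ast u^j$, the tangential derivatives agree: $b^k(u^k)\big|_M=b_z^k(v^k)$ for $k=1,2$. Second, $u^3$ vanishes identically on $M$, so every undifferentiated occurrence of $u^3$ drops out and only the normal derivative $b^3(u^3)$ can survive. Third, the Gauss formula \eqref{toinen-perus} identifies the intrinsic connection form with the tangential restriction of the ambient one, $\omega_{12}^z(b_z^k)=\omega_{12}(b^k)=\gamma_k$. Collecting the surviving terms and using the notation \eqref{merk} for $\alpha_j=\omega_{j3}(b^3)$, the tangential connection contributions cancel against those of $\mathsf{div}(v)$, leaving the single discrepancy
\[
\mathsf{div}(u)\big|_M-\mathsf{div}(v)=b^3(u^3)\big|_M-\big(v^1\alpha_1+v^2\alpha_2\big)\ .
\]

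Finally I would construct the extension. The tangential components may be extended off $M$ in any manner whatsoever, say constantly along the normal flow, since by the first fact above only their tangential derivatives enter on $M$ and these are already fixed by $v$. What remains is to choose $u^3$ with $u^3\big|_M=0$ and prescribed normal derivative $b^3(u^3)\big|_M=v^1\alpha_1+v^2\alpha_2=:h$. Working in a collar neighborhood built from the flow $\Phi_s$ of $b^3$, so that every nearby point is $\Phi_s(p)$ for a unique $p\in M$ and small $s$ and $b^3$ generates $\partial_s$, I would simply set $u^3(\Phi_s(p))=s\,h(p)$; then $u^3\big|_M=0$ and $b^3(u^3)\big|_M=h$, exactly as required, and the difference above vanishes.

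The main subtlety, and the conceptual point worth stressing, is that imposing $u^3\equiv 0$ on $M$ does \emph{not} force its normal derivative $b^3(u^3)$ to vanish; it is precisely this normal derivative that remains a free datum of the extension. Because all the tangential terms match automatically, the entire obstruction collapses to a single scalar condition on the one jet of $u^3$ transverse to $M$, which is always solvable. Thus there is no genuine analytic obstacle, only the bookkeeping of separating the tangential part (which is rigid) from the normal derivative (which is free), and the same freedom is what will later permit finer control over the extension.
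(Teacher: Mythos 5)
Your proposal is correct and follows essentially the same route as the paper: both compute $\mathsf{div}(u)$ and $\mathsf{div}(v)$ in the adapted orthonormal frame, observe that the tangential terms match and the whole obstruction reduces to the single condition $b^3(u^3)=\alpha_1 u^1+\alpha_2 u^2$ on $M$ (the paper's equation \eqref{ehto}). The only difference is the last step: you build $u^3$ explicitly in a collar via the flow of $b^3$, enforcing the condition only on $M$, whereas the paper solves the noncharacteristic first-order PDE in a whole neighborhood of $M$ --- a slightly stronger construction that it then reuses for the divergence-free corollary.
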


We will call this kind of extension a compatible extension.

\begin{proof}
Note that  $
\mathsf{div}(b^1)\Xi_0=d\big(\theta^2 \wedge\theta^3\big)$ and similarly for other $b^j$ and $b_z^j$. Then   we compute, using the notation  \eqref{merk}, that  
\begin{align*}
    \mathsf{div}(u)=&
    u^1\mathsf{div}(b^1)+
    u^2\mathsf{div}(b^2)+
    u^3\mathsf{div}(b^3)+
    b^1(u^1)+b^2(u^2)
    +b^3(u^3)\\
    =& -(\gamma_2+\alpha_1) u^1+
    (\gamma_1-\alpha_2) u^2-
  (t_{11}+t_{22})    u^3+
    b^1(u^1)+b^2(u^2)
    +b^3(u^3)\\
     \mathsf{div}(v)=&
    v^1\mathsf{div}(b_z^1)+
    v^2\mathsf{div}(b_z^2)+
    b_z^1(v^1)+b_z^2(v^2)\\
    =&- \gamma_2 v^1+
 \gamma_1   v^2+
    b_z^1(v^1)+b_z^2(v^2)
\end{align*}
Hence $\mathsf{div}(u)$ restricted to $M$ is equal to $\mathsf{div}(v)$, i.e. $f^\ast \mathsf{div}(u)=\mathsf{div}(v)$, if
\begin{equation}
  b^3(u^3)=\alpha_1 u^1+\alpha_2 u^2\ .
\label{ehto}
\end{equation}
Now we can choose extensions $u^1$ and $u^2$ arbitrarily and if we require that \eqref{ehto} is satisfied in the neighborhood of $M$ then we have a linear first order PDE for $u^3$ which is noncharacteristic because $b^3$ is normal to $M$. Hence a  local solution with initial condition $u^3=0$ on $M$ exists  by standard theorems.
\end{proof}

Note that \eqref{ehto} is not the only way to extend $u^3$; one could for example also choose 
\[
  b^3(u^3)=\alpha_1 u^1+\alpha_2 u^2 
  +Cu^3
\]
where $C$ is a linear operator which contains only tangential derivatives.

In case of compressible problems it is perhaps not so clear how $\mathsf{div}(v)$ and $\mathsf{div}(u)$ should be related. However, it seems that in incompressible problems compatibility would be a natural requirement. Since the incompressible case is so fundamental we will in the sequel consider both general and compatible extensions. Note that if one thinks from the point of view of restrictions, then typically restrictions are not compatible.

The conclusion of the previous Theorem is still quite weak in the sense that if  $v$ is divergence free and  then the compatible extension constructed above is not necessarily  divergence free in the neighborhood of $M$. However, by modifying the argument slightly we obtain
\begin{corollary} Let $v$ be a vector field on $M$ such that $\mathsf{div}(v)=0$. Then there is a compatible extension $u$  such that  
$\mathsf{div}(u)=0$  in the neighborhood of $M$.  
\end{corollary}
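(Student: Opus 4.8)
The plan is to build on the compatible extension from Theorem~\ref{div-lause} and then \emph{correct} it so that the divergence vanishes identically in a neighborhood of $M$, not merely on $M$ itself. Starting from a divergence-free $v$, Theorem~\ref{div-lause} gives an extension $u$ with $f^\ast\mathsf{div}(u)=\mathsf{div}(v)=0$, so $\mathsf{div}(u)$ vanishes on $M$ but possibly not off it. The idea is to seek a corrected field of the form $\tilde u = u + \phi\, b^3$, where $\phi$ is a function to be determined that vanishes on $M$ together with enough of its normal derivative to preserve the compatibility and tangency already achieved. Adding a purely normal term keeps $\tilde u$ tangent along $M$ provided $\phi=0$ on $M$, and it leaves $\tilde u = v$ on $M$ unchanged.

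First I would compute $\mathsf{div}(\tilde u)$ using the divergence formula already derived in the proof of Theorem~\ref{div-lause}. Writing $\tilde u^3 = u^3 + \phi$, the divergence becomes
\begin{equation}
\mathsf{div}(\tilde u) = \mathsf{div}(u) - (t_{11}+t_{22})\phi + b^3(\phi)\ .
\label{corr-div}
\end{equation}
Setting the left-hand side to zero gives a linear first-order PDE for $\phi$,
\begin{equation}
b^3(\phi) = (t_{11}+t_{22})\phi - \mathsf{div}(u)\ ,
\label{corr-pde}
\end{equation}
which is exactly of the same transport type as equation~\eqref{ehto}: the principal part is the directional derivative $b^3(\cdot)$ along the normal, so the equation is noncharacteristic with respect to $M$. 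I would then impose the initial condition $\phi = 0$ on $M$ and invoke the standard local existence theorem for noncharacteristic linear first-order PDE, just as in the previous proof.

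The main point to verify is that this correction does not destroy compatibility, and here the initial condition does the work: since $\mathsf{div}(u)=0$ on $M$ and $\phi=0$ on $M$, equation~\eqref{corr-pde} forces $b^3(\phi)=0$ on $M$ as well. Hence $\tilde u^3 = u^3 + \phi$ still vanishes on $M$, so $\tilde u$ is tangent along $M$ and restricts to $v$; and because $\phi$ and its relevant normal derivative vanish on $M$, the compatible extension remains compatible, now with the stronger conclusion $\mathsf{div}(\tilde u)=0$ identically near $M$. The only delicate step is confirming that \eqref{corr-pde} is genuinely noncharacteristic, i.e.\ that $b^3$ is transverse to $M$ — but this is immediate since $b^3$ is by construction the unit normal. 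Renaming $\tilde u$ as $u$ gives the corollary.
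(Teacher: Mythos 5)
Your proof is correct and, modulo bookkeeping, is the same as the paper's: the paper extends $u^1,u^2$ arbitrarily and solves the single noncharacteristic transport equation $b^3(u^3)-(t_{11}+t_{22})u^3=F$ with $u^3=0$ on $M$, and your corrector $\phi$ satisfies exactly this operator applied to $u^3+\phi$, so the two-step splitting (compatible extension plus normal correction) is just a repackaging of the same argument. The verification that compatibility survives is in fact automatic once $\mathsf{div}(\tilde u)=0$ holds near $M$, since then its restriction to $M$ is trivially $0=\mathsf{div}(v)$.
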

\begin{proof}
We let $u^1$ and $u^2$ be $v^1$ and $v^2$ on $M$ and then we extend them in some way. Then the equation $\mathsf{div}(u)=0$, given above, can be written as
\begin{equation}
   b^3(u^3)- (t_{11}+t_{22})    u^3=F
\label{div0}
\end{equation}
where $F$ is some known function. This together with the initial condition $u^3=0$ on $M$ is a standard problem so the local solution exists.
\end{proof}

Let us now see how this looks with the example \eqref{esim}. Choosing for example $v^1=az_1/\lambda$ and $v^2=-z_2/\sin(z_2)$ we have $\mathsf{div}(v)=0$. Then putting simply $u^1=ay_1/\lambda$ and $u^2=-y_2/\sin(y_2)$ we get
\[
\frac{(1-a^2)\sin(2y_2)}{2ay_3\lambda}u^3_2+
    \frac{\lambda}{a}\,u^3_3+
   \frac{a^{2}+\lambda^{2}}{a\lambda^3 y_3}\, u^3=
   \frac{\left(a^2 -1\right) \cos \! \left(y_{2} \right)  y_{2} }{\lambda^3}\ .
\]
Let us then consider the vorticity of the flow and its restriction or extension. Let  $v$ be a vector field on $M$ and $u$ be a vector field on $M_0$; their vorticities are given by formulas
  \[
   \mathsf{rot}(v)=\ast d\, v\quad\mathrm{and}\quad
   \mathsf{curl}(u)=\sharp\ast d\, u
  \]
  where $\ast$ is the Hodge star.
\begin{lemma} Let $u$ be a vector field on $M_0$ which is tangent along $M$ and let $v$ be its restriction on $M$. Then $\mathsf{rot}(v)$ is essentially the pullback of $\mathsf{curl}(u)$:
\begin{equation}
  \mathsf{rot}(v)\Xi=f^\ast du=f^\ast\big(\ast\,\flat \,\mathsf{curl}(u)\big)\ .
  \label{rot-curl}
\end{equation}
In particular if $u$ is harmonic and divergence free, then $\mathsf{rot}(v)=0$.
\end{lemma}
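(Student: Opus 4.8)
The plan is to strip the statement down to definitions and reduce it to the single geometric fact that the pullback $f^\ast$ annihilates the normal coframe $\theta^3$. Following the paper's convention I suppress the musical isomorphisms, so that $du$ and $dv$ abbreviate $d\flat u$ and $d\flat v$, and I write $\ast_M$ for the Hodge star on the surface $M$ and $\ast$ for the star on the ambient $M_0$. The first thing I would establish is $\flat v=f^\ast\flat u$. Writing $\flat u=u^1\theta^1+u^2\theta^2+u^3\theta^3$ and using $f^\ast\theta^1=\theta_z^1$, $f^\ast\theta^2=\theta_z^2$ together with $f^\ast\theta^3=0$ (the defining property of an adapted frame, since $\theta^3$ vanishes on vectors tangent to $M$), the normal term simply drops out and $f^\ast\flat u=v^1\theta_z^1+v^2\theta_z^2=\flat v$, because $f^\ast u^j=v^j$. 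Since exterior differentiation commutes with pullback, this immediately yields $d\flat v=f^\ast d\flat u=f^\ast du$.

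Next I would rewrite the left-hand side as the claimed $2$-form. On the surface one has $\ast_M 1=\Xi$ and $\ast_M\ast_M=\mathrm{id}$ on functions and on $2$-forms, so applying $\ast_M$ to the definition $\mathsf{rot}(v)=\ast_M d\flat v$ gives $d\flat v=\ast_M\mathsf{rot}(v)=\mathsf{rot}(v)\,\Xi$. Combined with the previous step this is the first equality $\mathsf{rot}(v)\,\Xi=f^\ast du$. The second equality is then purely formal: from $\mathsf{curl}(u)=\sharp\ast d\flat u$ we get $\flat\,\mathsf{curl}(u)=\ast d\flat u$, and since $\ast\ast$ is the identity on $2$-forms in dimension three, $\ast\,\flat\,\mathsf{curl}(u)=\ast\ast d\flat u=d\flat u=du$; pulling back by $f^\ast$ completes the chain.

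For the concluding assertion it suffices, by the identity just proved, to show that $du=d\flat u=0$, since then $\mathsf{rot}(v)\,\Xi=f^\ast du=0$ forces $\mathsf{rot}(v)=0$. Here the hypotheses enter: divergence-freeness is $\delta\flat u=0$, and together with $\Delta_H u=0$ it makes $\flat u$ a harmonic one-form, hence closed, so $d\flat u=0$. I expect the genuine content of the proof to be the reduction $\flat v=f^\ast\flat u$, where everything hinges on recognizing that $f^\ast$ kills $\theta^3$ so that the awkward normal component $u^3$ never appears; the remainder is bookkeeping, the only delicate point being to keep the two Hodge stars and their $\ast\ast=\mathrm{id}$ signs in dimensions two and three straight. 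For the special case the one non-formal ingredient is the passage from $\Delta_H u=0$ and $\delta\flat u=0$ to closedness $d\flat u=0$, which I would justify by the standard Hodge-theoretic pairing, so that a harmonic divergence-free field is genuinely curl-free.
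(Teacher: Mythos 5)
Your proof is correct and follows essentially the same route as the paper: the identity rests on $d$ commuting with $f^\ast$ (together with $f^\ast\theta^3=0$, which the paper leaves implicit), and the final claim on the standard Hodge-theoretic fact that a harmonic, coclosed form is closed. You have merely filled in the bookkeeping with the Hodge stars that the paper's one-line proof omits.
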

\begin{proof}
This follows because  pullback commutes with exterior derivative: $df^\ast u=f^\ast du$. Then recall that if $u$ is harmonic and $\mathsf{div}(u)=-\delta u=0$ then also $du=0$. 
\end{proof}

If a planar flow is  interpreted as a three dimensional flow invariant with a certain direction, then the vorticity of the planar flow can be interpreted as a vector orthogonal to the plane. In the present context we could then ask if there is an extension of $v$ such that
\[
   \mathsf{curl}(u)=\mathsf{rot}(v)b^3\ .
\]
It is intuitively rather clear that all extensions do not satisfy this requirement. However,  the previous lemma implies that the normal component of $ \mathsf{curl}(u)$ is $\mathsf{rot}(v)b^3$. 
\begin{corollary}
  Let $v$ be a vector field on $M$ and let $u$ be its extension. Then $g_0( \mathsf{curl}(u),b^3)=\mathsf{rot}(v)$. 
 \label{rot-curl-koro}
\end{corollary}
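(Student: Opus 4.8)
The plan is to read off the normal component directly from the identity \eqref{rot-curl} of the preceding Lemma. Write $\mathsf{curl}(u)=w^1b^1+w^2b^2+w^3b^3$ with $w^j=g_0(\mathsf{curl}(u),b^j)$; the assertion is exactly that $w^3=\mathsf{rot}(v)$ on $M$. Since \eqref{rot-curl} already expresses $\mathsf{rot}(v)\,\Xi$ as the pullback $f^\ast\big(\ast\,\flat\,\mathsf{curl}(u)\big)$, it suffices to compute this pullback in the adapted coframe and to see which component of $\mathsf{curl}(u)$ survives.

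First I would expand $\flat\,\mathsf{curl}(u)=w^1\theta^1+w^2\theta^2+w^3\theta^3$, using that the frame is orthonormal. Applying the Hodge star with the orientation $\Xi_0=\theta^1\wedge\theta^2\wedge\theta^3$ gives
\[
\ast\,\flat\,\mathsf{curl}(u)=w^1\,\theta^2\wedge\theta^3+w^2\,\theta^3\wedge\theta^1+w^3\,\theta^1\wedge\theta^2\ .
\]
Now pull back by $f$. The crucial observation is that $f^\ast\theta^3=0$, because $\theta^3=\flat b^3$ and $b^3$ is normal to $M$, so $\theta^3$ annihilates every vector tangent to $M$. Hence the first two terms pull back to zero and only the last one survives; since $f^\ast\theta^1=\theta_z^1$, $f^\ast\theta^2=\theta_z^2$ and $\Xi=\theta_z^1\wedge\theta_z^2$, one obtains $f^\ast\big(\ast\,\flat\,\mathsf{curl}(u)\big)=w^3\,\Xi$ on $M$.

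Comparing with \eqref{rot-curl} yields $w^3\,\Xi=\mathsf{rot}(v)\,\Xi$, and since $\Xi$ is nowhere vanishing this forces $w^3=\mathsf{rot}(v)$, which is the claim. I do not expect a real obstacle here: the statement is essentially an unpacking of the Lemma, and the only points requiring care are the vanishing $f^\ast\theta^3=0$ and the consistency of the Hodge-star and pullback conventions (in particular $\ast\ast=\mathrm{id}$ on $2$-forms in three dimensions, which is what underlies the identity $\ast\,\flat\,\mathsf{curl}(u)=du$ used in \eqref{rot-curl}). Once the orientation and sign conventions are fixed, the computation is immediate.
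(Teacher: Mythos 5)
Your proof is correct and is essentially the paper's own argument: both unpack the Lemma's identity \eqref{rot-curl} by observing that only the $\theta^1\wedge\theta^2$ component of $du=\ast\,\flat\,\mathsf{curl}(u)$ survives restriction to $M$, which is exactly the normal component of the curl. The paper phrases this by wedging with $\theta^3$ and evaluating on the frame, while you expand the Hodge star and use $f^\ast\theta^3=0$, but the computation is the same.
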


One interesting class of flows is the Beltrami flow where vorticity and flow itself are linearly dependent, i.e. there is a function $\mu$ such that $\mathsf{curl}(u)=\mu u$. Hence if $u$ is tangent along $M$, then the vorticity is also tangent to $M$ and in this case  $\mathsf{rot}(v)=0$. If we require in addition that $v$ is divergence free, then $v$ would be in fact harmonic. 

\begin{proof}
 Recall that 
  \[
 \theta^3\wedge d u=
 g_0(\mathsf{curl}(u),b^3)\Xi_0\ .
 \]
Then using \eqref{rot-curl} we have
\[
( \theta^3\wedge d u)(b^1,b^2,b^3)=du(b^1,b^2)
 =dv(b_z^1,b_z^2)=\mathsf{rot}(v)\ . 
\] 
\end{proof}

Then it is natural to ask if there is an extension such that $ \mathsf{curl}(u)=\mathsf{rot}(v)b^3$. 
\begin{theorem}
 Let $v$ be a vector field on $M$. Then there is an extension $u$ such that
 \[
    \mathsf{curl}(u)=\mathsf{rot}(v)b^3\quad\mathrm{and}\quad
    \mathsf{div}(u)=\mathsf{div}(v)
    \quad\mathrm{on}\ M.
 \]
 Moreover, if $\mathsf{div}(v)=0$, then there is an extension such that $\mathsf{div}(u)=0$ in the neighborhood of $M$.
\end{theorem}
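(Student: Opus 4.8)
The plan is to reduce the whole statement to a prescription of Cauchy data for $u^1,u^2,u^3$ along $M$, exploiting that curl and divergence are first order, so that only the values and first normal derivatives of the $u^i$ on $M$ matter. First I would record, generalizing the identity used in the proof of Corollary~\ref{rot-curl-koro}, that for every $j$
\[
  \theta^j\wedge du=g_0(\mathsf{curl}(u),b^j)\,\Xi_0,
\]
so the three frame components of $\mathsf{curl}(u)$ are exactly the three coefficients of $du$ in the basis $\{\theta^2\wedge\theta^3,\theta^3\wedge\theta^1,\theta^1\wedge\theta^2\}$. By Corollary~\ref{rot-curl-koro} the normal component $g_0(\mathsf{curl}(u),b^3)$ already equals $\mathsf{rot}(v)$ for \emph{any} extension, so the condition $\mathsf{curl}(u)=\mathsf{rot}(v)b^3$ on $M$ amounts to forcing the two tangential components to vanish on $M$.

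Next I would expand $du=\sum_i du^i\wedge\theta^i+\sum_i u^i\,d\theta^i$, insert the structure equations $d\theta^i=-\sum_k\omega_{ik}\wedge\theta^k$ and write $du^i=\sum_j b^j(u^i)\theta^j$. The coefficient of $\theta^2\wedge\theta^3$ gives $g_0(\mathsf{curl}(u),b^1)$ and (up to sign) the coefficient of $\theta^1\wedge\theta^3$ gives $g_0(\mathsf{curl}(u),b^2)$. The structural fact I would verify is that, after restricting to $M$ — where $u^3\equiv 0$, hence also $b^1(u^3)=b^2(u^3)=0$ — each tangential component reduces to a single normal derivative plus purely tangential data:
\begin{align*}
  g_0(\mathsf{curl}(u),b^1)\big|_M&=-b^3(u^2)+t_{12}u^1+t_{22}u^2+\gamma_3 u^1,\\
  g_0(\mathsf{curl}(u),b^2)\big|_M&=b^3(u^1)+(\text{tangential expression in }u^1,u^2).
\end{align*}
Since $u^1,u^2$ equal $v^1,v^2$ on $M$, the tangential terms are known functions, and crucially $b^3(u^3)$ does \emph{not} appear. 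Thus the two curl conditions determine $b^3(u^1)\big|_M$ and $b^3(u^2)\big|_M$ uncoupled from the divergence condition \eqref{ehto}, which fixes $b^3(u^3)\big|_M$. This is a decoupled (triangular) prescription of the full first order jet of $u$ along $M$; realizing it by any smooth $u$ — for instance one linear in a collar coordinate normal to $M$ — proves the first assertion.

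For the last assertion I would keep the curl-dictated values of $b^3(u^1)\big|_M,b^3(u^2)\big|_M$, extend $u^1,u^2$ off $M$ in any smooth way realizing them, and then instead of prescribing $u^3$ directly I would \emph{solve} the transport equation \eqref{div0}, $b^3(u^3)-(t_{11}+t_{22})u^3=F$, with $u^3=0$ on $M$; this is noncharacteristic and locally solvable exactly as in the divergence-free corollary following Theorem~\ref{div-lause}, yielding $\mathsf{div}(u)=0$ throughout a neighborhood. The two curl conditions are untouched because they depend only on the $1$-jet of $u^1,u^2$ along $M$ and on $u^3\big|_M=0$. The one point that must be checked, which I expect to be the only real obstacle, is compatibility: evaluating $F$ on $M$ against \eqref{ehto} one finds $F\big|_M=\alpha_1 v^1+\alpha_2 v^2-\mathsf{div}(v)$, so when $\mathsf{div}(v)=0$ the transport solution automatically gives $b^3(u^3)\big|_M=\alpha_1 u^1+\alpha_2 u^2$, i.e. $\mathsf{div}(u)=\mathsf{div}(v)=0$ on $M$ as well. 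Hence the prescribed-curl extension and the divergence-free extension are mutually consistent, which completes the argument.
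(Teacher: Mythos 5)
Your proposal is correct and follows essentially the same route as the paper: you compute the frame components of $\mathsf{curl}(u)$ on $M$ via $\theta^j\wedge du$, note that the normal component is automatic by Corollary \ref{rot-curl-koro}, reduce the two tangential curl conditions to prescribing $b^3(u^2)$ and $b^3(u^1)$ on $M$ (decoupled from the divergence condition \eqref{ehto} for $b^3(u^3)$), and handle the divergence-free case by replacing the $u^3$-prescription with the transport equation \eqref{div0}, exactly as the paper does. The only differences are minor: the paper realizes these conditions by solving the noncharacteristic system \eqref{laajennus} in a whole neighborhood rather than just prescribing the $1$-jet along $M$ as you do, and your formulas retain the zeroth-order terms $\gamma_3 u^1$ and $\gamma_3 u^2$ coming from $\omega_{12}(b^3)$, which the paper's displayed expressions for $g_0(\mathsf{curl}(u),b^1)$ and $g_0(\mathsf{curl}(u),b^2)$ appear to omit (this affects only the right-hand sides of \eqref{laajennus}, not the structure of the argument).
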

Note that a priori we have four equations for three unknowns, but since according to Corollary \ref{rot-curl-koro}   one condition is automatically satisfied, then it is rather natural to expect that there might be a  solution. 

\begin{proof}
Computing and evaluating on  $M$ we obtain
\begin{align*}
  \theta^1\wedge  d  u=&
\theta^1\wedge\Big(
    -u^1( \omega_{12}\wedge\theta^2+
    \omega_{13}\wedge\theta^3)-
    u^2\omega_{23}\wedge\theta^3+
    du^2\wedge\theta^2+
    du^3\wedge\theta^3
  \Big)\\
  \theta^2\wedge  d u=&
\theta^2\wedge\Big(
    -u^1 
    \omega_{13}\wedge\theta^3+
    u^2(\omega_{12}\wedge\theta^1-\omega_{23}\wedge\theta^3)+
    du^1\wedge\theta^1+
    du^3\wedge\theta^3
  \Big)\\
g_0(\mathsf{curl}(u),b^1)=&
  t_{12}u^1+t_{22}u^2 -du^2(b^3)\\
g_0(\mathsf{curl}(u),b^2)=&-t_{11}
u^1-t_{12}u^2 +du^1(b^3)
\end{align*}
This leads to a linear  system of first order PDE:
\begin{equation}
\begin{aligned}
   b^3(u^1)=& t_{11}u^1+t_{12}u^2 \\
    b^3(u^2)=& t_{12}u^1+t_{22}u^2 \\
     b^3(u^3)=&\alpha_1 u^1+\alpha_2 u^2 
\end{aligned}
\label{laajennus}
\end{equation}
By standard theorems there is a  local solution. 
Then if $\mathsf{div}(v)=0$ and we want that $\mathsf{div}(u)=0$ in the neighborhood of $M$ then we can simply replace the third equation in \eqref{laajennus} by the equation  \eqref{div0}. 
\end{proof}

Here also one gets other extensions by adding appropriate terms $C_ju^3$ where the operators $C_j$ contain derivatives only in tangential directions. In the case of example \eqref{esim} $t_{12}=0$ so that the new equations are decoupled:
\begin{align*}
  \frac{(1-a^2)\sin(2y_2)}{2ay_3\lambda}u^1_2+
    \frac{\lambda}{a}\,u^1_3=&
    -\frac{1}{a\lambda y_3}\,u^1\\
     \frac{(1-a^2)\sin(2y_2)}{2ay_3\lambda}u^2_2+
    \frac{\lambda}{a}\,u^2_3=&
    -\frac{a}{\lambda^3 y_3}\,u^2\ .
\end{align*}

Theorem \ref{div-lause} has also another interesting corollary.
\begin{corollary}
    If $\mathsf{div}(u)$ restricted to $M$ is equal to $\mathsf{div}(v)$ then $[u,b^3]$ and $\overline{\nabla}_{b^3}u  $ are tangent along $M$.
    \label{koro}
\end{corollary}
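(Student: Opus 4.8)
The plan is to handle both assertions at once by using that the Levi-Civita connection $\overline{\nabla}$ is torsion-free, so that $[u,b^3]=\overline{\nabla}_u b^3-\overline{\nabla}_{b^3}u$. It then suffices to show that each of the two terms on the right is tangent along $M$, and the divergence hypothesis will enter only through the second one.

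First I would dispose of $\overline{\nabla}_u b^3$, which is the easy piece. Since $\overline{\nabla}_X Y$ is tensorial in $X$, along $M$ this expression depends only on the pointwise value $u|_M=v$, and $v$ is tangent because $u^3$ vanishes identically on $M$. Weingarten's equation \eqref{wein} then gives $\overline{\nabla}_v b^3=\omega_{13}(v)b^1+\omega_{23}(v)b^2$, which is manifestly tangent. Thus $\overline{\nabla}_u b^3$ is tangent along $M$ without using the hypothesis at all.

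Next I would compute $\overline{\nabla}_{b^3}u$ by expanding $u=u^1b^1+u^2b^2+u^3b^3$ and applying \eqref{kova} to each $\overline{\nabla}_{b^3}b^j$, keeping careful track of the antisymmetry $\omega_{ij}=-\omega_{ji}$. Collecting only the $b^3$-component and writing $\alpha_j=\omega_{j3}(b^3)$ as in \eqref{merk}, I expect the normal part to come out as $b^3(u^3)-\alpha_1 u^1-\alpha_2 u^2$. But this is exactly the quantity that the compatibility condition \eqref{ehto} forces to vanish on $M$: by the proof of Theorem \ref{div-lause}, the hypothesis that $\mathsf{div}(u)$ restricts to $\mathsf{div}(v)$ is equivalent to \eqref{ehto} holding on $M$. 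Hence the normal component of $\overline{\nabla}_{b^3}u$ vanishes along $M$, so $\overline{\nabla}_{b^3}u$ is tangent there, and combining with the first step gives that $[u,b^3]$ is tangent as well.

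There is no real obstacle here: the computation is short, and the only point requiring care is the index bookkeeping, so that the $b^3$-component of $\overline{\nabla}_{b^3}u$ lands precisely on the defining expression of \eqref{ehto}. The conceptual content is simply the observation that compatibility of the divergences is, geometrically, the statement that differentiating $u$ in the normal direction keeps it tangent to $M$.
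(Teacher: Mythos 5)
Your argument is correct and is essentially the paper's own proof: expand $\overline{\nabla}_{b^3}u$ via \eqref{kova} to see that its normal component is $b^3(u^3)-\alpha_1u^1-\alpha_2u^2$, which vanishes on $M$ by \eqref{ehto}, and then conclude for $[u,b^3]$ from torsion-freeness together with Weingarten's equation \eqref{wein}. No gaps; the identification of the divergence-compatibility hypothesis with \eqref{ehto} is exactly the point of the proof of Theorem \ref{div-lause}.
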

\begin{proof}
 Simply computing using \eqref{kova} we obtain
 \begin{align*}
   \overline{\nabla}_{b^3}u=&
   u^1 \overline{\nabla}_{b^3}b^1+
    u^2 \overline{\nabla}_{b^3}b^2+
     u^3 \overline{\nabla}_{b^3}b^3+
     b^3(u^1)b^1+ b^3(u^2)b^2+
      b^3(u^3)b^3\\
      =&\big(b^3(u^1)+\gamma_3u^2+\alpha_1u^3\big)b^1+
      \big(b^3(u^2) - \gamma_3u^1+\alpha_2u^3\big)b^2+
      \big(b^3(u^3)-\alpha_1u^1-\alpha_2u^2\big)b^3\ .
 \end{align*}
 Hence $ \overline{\nabla}_{b^3}u$ is tangent along $M$ if \eqref{ehto} holds. Since $ \overline{\nabla}_{u}b^3$ is always tangent along $M$ by  \eqref{wein} this implies that $[u,b^3]$ is then also tangent along $M$.
\end{proof}

In particular if the restriction/extension is compatible, then there is no need to project the bracket term in the formula \eqref{tulos}. However, the bracket, while seemingly natural here, does not appear in the decomposition below because it is necessarily extrinsic to $M$. Hence we must in any case analyze separately $\overline{\nabla}_{b^3}u$ and $\overline{\nabla}_u b^3$.

Before starting the decomposition let us first establish some notation which will facilitate the computations below. The main issue is how to define  $\overline{\nabla}_{b^3}u$ in the neighborhood of $M$. We will consider two cases: (1) the general case where the extension of $u$ is arbitrary, and (2) divergence free extension where we extend $u^3$ such that $\mathsf{div}(u)=0$ in the neighborhood of $M$.

Both cases can then be divided to two subcases, depending if we want the extension/restriction to be compatible or not. Let us then set $X_3= \overline{\nabla}_{b^3}b^3=\alpha_1b^1+\alpha_2b^2$ and 
\[
  \rho=b^3(u^3)-\alpha_1 u^1-\alpha_2 u^2=
  b^3(u^3)-g_0(X_3,u)\  .
\]
The vector field $X_3$ will appear frequently in various formulas. When we evaluate it on $M$ we can also write $X_3=\alpha_1b_z^1+\alpha_2b_z^2$, without changing the notation. Let us also introduce $q=b^3(u^1)b^1+
    b^3(u^2)b^2$.
With this notation one can write
\begin{equation}
  \overline{\nabla}_{b^3}u=
  q+u^3X_3+\gamma_3(u^2b^1-u^1b^2)+\rho b^3
\label{dub3}
\end{equation}
in the neighborhood of $M$.
In the compatible case $\rho=0$ on $M$, but not (necessarily) in the neighborhood of $M$.  Then we compute using \eqref{kaare} and \eqref{omega-kaava} that
\begin{equation}
\begin{aligned}
b^1(t_{12})=&b^2(t_{11})-\Omega_{13}(b^1,b^2)+
   (t_{11}-t_{22})\gamma_1+2t_{12}\gamma_2\\
   b^2(t_{12})=&b^1(t_{22})+\Omega_{23}(b^1,b^2)
  -2t_{12}\gamma_1+ (t_{11}-t_{22})\gamma_2
\end{aligned}
\label{t-der}
\end{equation}

\section{Decomposition of Bochner Laplacian}
We are now ready to start the decomposition of $\Delta_Bu$. The basis of the decomposition is the identity
\begin{align*}
\Delta_Bu=&\overline{\nabla}\,\overline{\nabla} u
 \Big(b^1\otimes b^1+b^2\otimes b^2+b^3\otimes b^3\Big)\\ =&\overline{\nabla}_{b^1}\overline{\nabla}_{b^1}u+
\overline{\nabla}_{b^2}\overline{\nabla}_{b^2}u
  -\overline{\nabla}_{X}u+\overline{\nabla}\,\overline{\nabla} u\,
 b^3\otimes b^3
\end{align*}
where  $X=X_1+X_2$ and $X_j=\overline{\nabla}_{b^j}b^j$. Note that we first compute with vector field $u$ and then evaluate the final result on $M$ so in the end we have the vector $v$ and its derivatives, but also terms containing the normal derivatives of $u$.

Let us first 
 consider the term $\overline{\nabla}_{X}u$. 
\begin{lemma} $\overline{\nabla}_{X}u$ on $M$ is given by
\[
   \overline{\nabla}_{X}u=  \nabla_{Kw} v+ 2\mathcal{H}\,(q+\gamma_3 Kv)+\big(2\mathcal{H}\, \rho+S(Kw,v)\big)b^3  
\]
where $ K=\theta_z^1\otimes\theta_z^2-\theta_z^2\otimes\theta_z^1$  and $w=\gamma_1b_z^1+\gamma_2b_z^2$.
\end{lemma}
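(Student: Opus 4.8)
The plan is to reduce the claim to two identities already in hand: the connection-form formula \eqref{kova}, which lets me compute $X=X_1+X_2$ explicitly, and then the Gauss formula \eqref{toinen-perus} together with the normal-derivative formula \eqref{dub3}, which handle the splitting of $\overline{\nabla}_X u$ into a tangential piece and a piece along $b^3$. Everything is a direct computation once the vector $X$ is identified, so the work is mostly bookkeeping.

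First I would compute $X_1=\overline{\nabla}_{b^1}b^1$ and $X_2=\overline{\nabla}_{b^2}b^2$ from \eqref{kova}. Using $\omega_{ij}=-\omega_{ji}$ together with the definitions $\gamma_j=\omega_{12}(b^j)$ from \eqref{merk} and $t_{ij}=-\omega_{i3}(b^j)$ from the line preceding it, one gets $X_1=-\gamma_1 b^2+t_{11}b^3$ and $X_2=\gamma_2 b^1+t_{22}b^3$, hence
\[
X=\gamma_2 b^1-\gamma_1 b^2+(t_{11}+t_{22})b^3 .
\]
By \eqref{kaarevuudet} the normal coefficient is exactly $2\mathcal{H}$. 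The key observation is that the tangential part $\gamma_2 b^1-\gamma_1 b^2$, evaluated on $M$, is precisely $Kw$: the operator $K$ acts as the quarter-turn $b_z^1\mapsto -b_z^2$, $b_z^2\mapsto b_z^1$, so $Kw=K(\gamma_1 b_z^1+\gamma_2 b_z^2)=\gamma_2 b_z^1-\gamma_1 b_z^2$. Thus on $M$ we may write $X=Kw+2\mathcal{H}\,b^3$ and therefore $\overline{\nabla}_X u=\overline{\nabla}_{Kw}u+2\mathcal{H}\,\overline{\nabla}_{b^3}u$.

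For the tangential term I would use that $Kw$ is tangent along $M$ and that, since $u^3\equiv 0$ on $M$, the restriction of $u$ to $M$ is the tangential field $v$. Because $Kw$ is tangent, $\overline{\nabla}_{Kw}u$ evaluated on $M$ depends only on the values of $u$ along $M$, i.e. on $v$, so the Gauss formula \eqref{toinen-perus} applies and yields $\overline{\nabla}_{Kw}u=\nabla_{Kw}v+S(Kw,v)b^3$. For the normal term I would substitute $u^3=0$ into \eqref{dub3}: the summand $u^3X_3$ drops, while $u^2b^1-u^1b^2$ becomes $v^2 b_z^1-v^1 b_z^2=Kv$ under the same quarter-turn, leaving $\overline{\nabla}_{b^3}u=q+\gamma_3 Kv+\rho b^3$ on $M$. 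Multiplying by $2\mathcal{H}$ and adding the tangential contribution, I would collect tangential and normal pieces to obtain
\[
\overline{\nabla}_X u=\nabla_{Kw}v+2\mathcal{H}\,(q+\gamma_3 Kv)+\big(2\mathcal{H}\,\rho+S(Kw,v)\big)b^3,
\]
which is the claim.

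The one point deserving care, and where I expect the only genuine subtlety, is the two ''evaluate on $M$'' steps. Although $u^3$ vanishes identically on $M$, its normal derivative does not; that surviving piece is exactly what is packaged into $\rho$ via \eqref{dub3}, so $u^3$ may be set to zero only after the covariant derivatives have been taken and \eqref{dub3} invoked. Likewise, applying the Gauss formula to $\overline{\nabla}_{Kw}u$ rather than to a field defined only on $M$ is justified precisely because $Kw$ is tangent, which confines the differentiation to $M$; making this explicit is the single step that warrants comment rather than mere computation.
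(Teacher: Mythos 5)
Your proof is correct and follows essentially the same route as the paper: both split $X$ into its tangential part $Kw$ and normal part $2\mathcal{H}\,b^3$ (the paper via $X_j=Y_j+t_{jj}b^3$ with $Y_j=\nabla_{b_z^j}b_z^j$, you via an explicit connection-form computation), then apply the Gauss formula to the tangential piece and \eqref{dub3} to the normal piece. The subtleties you flag (evaluating $u^3=0$ only after differentiating, and tangency of $Kw$ justifying the Gauss formula) are exactly the points the paper's shorter proof relies on implicitly.
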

\begin{proof}
On $M$ we can write $X_j=Y_j+t_{jj}b^3$ where $Y_j=\nabla_{b_z^j}b_z^j$ so that  using \eqref{dub3} we obtain
\[
   \overline{\nabla}_{X_j}u=
    \nabla_{Y_j}v+S(Y_j,v)b^3+
   t_{jj} \overline{\nabla}_{b^3}u=
    \nabla_{Y_j}v+S(Y_j,v)b^3
   + t_{jj}(q+\gamma_3Kv+\rho b^3)\ .
\]
Since $Kw=Y_1+Y_2$,  adding gives the result.
\end{proof}

It turns out to be useful to define  tensors 
\begin{align*}
   T_0=&\Big(2\,b_z^1\big(\mathcal{H}\big)+\Omega_{23}(b_z^1,b_z^2)\Big)\theta_z^1+
\Big(  2\, b_z^2\big(\mathcal{H}\big)-
\Omega_{13}(b_z^1,b_z^2)\Big)\theta_z^2\quad\mathrm{and}\\
    Q=& b_z^1(v^1)b_z^1\otimes b_z^1+
     b_z^1(v^2)b_z^1\otimes b_z^2+
      b_z^2(v^1)b_z^2\otimes b_z^1+
       b_z^2(v^2)b_z^2\otimes b_z^2
\end{align*}
Recall that for flat manifolds $\Omega_{ij}=0$. Since $2\mathcal{H}$ is the trace of $S$ one could also express $2\,b_z^1\big(\mathcal{H}\big)\theta_z^1+
  2\, b_z^2\big(\mathcal{H}\big)\theta_z^2$ using $\nabla S$; however, this does not seem to make the formulas any simpler.  
\begin{lemma} Let $\sigma=S_{ij}Q^{ij}$; then 
\begin{align*}
\overline{\nabla}_{b^1}\overline{\nabla}_{b^1}u+
\overline{\nabla}_{b^2}\overline{\nabla}_{b^2}u=&
 \Delta_Bv+\kappa\,v-2\mathcal{H}\,Sv+\nabla_{Kw}v+\\
&\Big(2\,\sigma+T_0v+S(Kw,v)
+2S(w,Kv)\Big)b^3\ .
\end{align*}
\end{lemma}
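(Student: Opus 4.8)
The plan is to compute $\overline{\nabla}_{b^j}\overline{\nabla}_{b^j}u$ for $j=1,2$ by applying Gauss's formula \eqref{toinen-perus} and Weingarten's equation \eqref{wein} twice, and then to sum. The first observation is that, since $u^3$ vanishes identically on $M$, its tangential derivatives $b^1(u^3)$ and $b^2(u^3)$ also vanish on $M$; hence along $M$ the field $u$ is tangent and restricts to $v$. Because $b^j$ is tangent for $j=1,2$, the value of $\overline{\nabla}_{b^j}u$ on $M$ depends only on $u$ along $M$, so \eqref{toinen-perus} gives the clean identity
\[
  \overline{\nabla}_{b^j}u\big|_M=\nabla_{b_z^j}v+S(b_z^j,v)\,b^3\ ,\qquad j=1,2.
\]

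Next I would differentiate this once more. The field $\overline{\nabla}_{b^j}u$ is defined in a neighbourhood, but since the outer derivative is again along the tangent direction $b^j$, it suffices to differentiate the vector field $\nabla_{b_z^j}v+S(b_z^j,v)\,b^3$ defined along $M$. Applying \eqref{toinen-perus} to the tangent part $\nabla_{b_z^j}v$ and \eqref{wein} in the form $\overline{\nabla}_{b^j}b^3=-S b^j$ to the normal part yields
\[
  \overline{\nabla}_{b^j}\overline{\nabla}_{b^j}u=
  \nabla_{b_z^j}\nabla_{b_z^j}v-S(b_z^j,v)\,S b^j
  +\Big(S(b_z^j,\nabla_{b_z^j}v)+b_z^j\big(S(b_z^j,v)\big)\Big)b^3\ .
\]
Summing over $j=1,2$ then splits the claim into a tangential and a normal part, which I would treat separately.

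For the tangential part I would recognise $\sum_j\nabla_{b_z^j}\nabla_{b_z^j}v=\Delta_B v+\nabla_{Y_1+Y_2}v$ from the definition of the Bochner Laplacian, where $Y_j=\nabla_{b_z^j}b_z^j$; a short check shows $Y_1+Y_2=Kw$, producing the term $\nabla_{Kw}v$. For the remaining sum I would note that $\sum_j S(b_z^j,v)\,Sb^j=S^2 v$, since $\{b_z^j\}$ is orthonormal and $S$ is symmetric, and then invoke the Cayley--Hamilton identity $S^2v=2\mathcal{H}\,Sv-\kappa\,v$ (valid because the trace of $S$ is $2\mathcal{H}$ and its determinant is $\kappa$ by \eqref{kaarevuudet}) to convert $-S^2v$ into $\kappa\,v-2\mathcal{H}\,Sv$. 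Together these give exactly the tangential terms $\Delta_B v+\kappa v-2\mathcal{H}Sv+\nabla_{Kw}v$.

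The delicate part, and the main obstacle, is the normal component $\sum_j\big(S(b_z^j,\nabla_{b_z^j}v)+b_z^j(S(b_z^j,v))\big)$. Expanding $\nabla_{b_z^j}v$ in the frame and applying the product rule, the terms carrying a derivative of $v$ collect into precisely $2\sigma=2S_{ij}Q^{ij}$. The surviving terms are linear in $v$ with coefficients built from $b^i(t_{k\ell})$ and $\gamma_i$, and these do not obviously match $T_0 v+S(Kw,v)+2S(w,Kv)$. The key is to eliminate the mixed derivatives $b^1(t_{12})$ and $b^2(t_{12})$ using the structure equations \eqref{t-der}, which is exactly where the curvature forms $\Omega_{13},\Omega_{23}$ (hence $T_0$) enter, and to use $2\mathcal{H}=t_{11}+t_{22}$ to pair $b^i(t_{11})+b^i(t_{22})$ with $2b_z^i(\mathcal{H})$. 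A direct comparison of the $v^1$- and $v^2$-coefficients then shows that they agree term by term. Verifying this cancellation carefully is the crux; everything else is a routine application of the moving-frame formulas from Section 2.1.
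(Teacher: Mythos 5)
Your proposal is correct and follows essentially the same route as the paper: apply the Gauss formula \eqref{toinen-perus} and Weingarten's equation \eqref{wein} twice, identify $\sum_j\nabla_{b_z^j}\nabla_{b_z^j}v=\Delta_Bv+\nabla_{Kw}v$ with $Kw=Y_1+Y_2$, and split the normal component into the derivative terms $2\sigma$ plus terms linear in $v$ that are matched using \eqref{t-der}. The only cosmetic difference is that you obtain $\kappa\,v-2\mathcal{H}\,Sv$ from $-S^2v$ via Cayley--Hamilton, whereas the paper performs the same computation explicitly in components; both you and the paper leave the final term-by-term verification of the normal component at the level of ``straightforward to check.''
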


Somewhat strangely the tensor $P=\kappa\,g-2\mathcal{H}\,S$ which appears above has a name: $-P$ is the \emph{third} fundamental form. Note that $P$ is negative definite. 

\begin{proof}
We have first
\[
  \overline{\nabla}_{b^1}u=
  \nabla_{b_z^1}v+S(v,b_z^1)b^3\ .
\]
Then differentiating the tangential component gives
\[ 
\overline{\nabla}_{b^1}\nabla_{b_z^1}u=
\nabla_{b_z^1}\nabla_{b_z^1}v+S(\nabla_{b_z^1}v,b_z^1)b^3=
  \nabla\nabla v \, b_z^1\otimes b_z^1+\nabla_{Y_1}v  
  +S(\nabla_{b_z^1}v,b_z^1)b^3\ .
\]
Computing similarly with $\overline{\nabla}_{b^2}\nabla_{b_z^2}u$ and adding we obtain
\begin{align*}
 \overline{\nabla}_{b^1}\nabla_{b^1}u+
 \overline{\nabla}_{b^2}\nabla_{b^2}u=&
 \Delta_Bv+\nabla_{Kw}v
  +\Big(S(\nabla_{b_z^1}v,b_z^1)
  +S(\nabla_{b_z^2}v,b_z^2)\Big)b^3
\end{align*}
Then it is straightforward to check that
\[
S(\nabla_{b_z^1}v,b_z^1)+S(\nabla_{b_z^2}v,b_z^2)=
\sigma+S(w,Kv)\ .
\]
Then from the normal component we get using \eqref{t-der} that
\[
b^1_z(S(v,b_z^1))+b^2_z(S(v,b_z^2))=
\sigma+T_0v+S(Kw,v)+S(w,Kv)\ .
\]
Then
\begin{align*}
\sum_{j=1}^2
S(v,b_z^j)\overline{\nabla}_{b^j}b^3
=&-(t_{11}v^1+t_{12}v^2)(t_{11}b_z^1+t_{12}b_z^2)
-(t_{12}v^1+t_{22}v^2)(t_{12}b_z^1+t_{22}b_z^2)\\
=&(t_{11}t_{22}-t_{12}^2)v+
(t_{11}+t_{22})\Big((t_{11}v^1+t_{12}v^2)b_z^1+
(t_{12}v^1+t_{22}v^2)b_z^2\Big)\\
=&\kappa\,v-2\mathcal{H}\,Sv\ .
\end{align*}
Now adding everything gives the result.
\end{proof}

Putting together previous Lemmas gives
\begin{equation}
\begin{aligned}
  \overline{\nabla}\,\overline{\nabla} u
 \Big(b^1\otimes b^1+b^2\otimes b^2\Big)=&
 \Delta_Bv+\kappa\,v-2\mathcal{H}\Big(
 \,Sv +q+\gamma_3 Kv\Big)+\\
 &\Big(2\,\sigma+T_0v+2S(w,Kv)
-2\mathcal{H}\, \rho \Big)b^3
\end{aligned}
\label{lemmat12}
\end{equation}

Then we have to analyze
\[
\overline{\nabla}\,\overline{\nabla} u\,
 b^3\otimes b^3=
 \overline{\nabla}_{b^3}\overline{\nabla}_{b^3}u-\overline{\nabla}_{X_3}u=
  \overline{\nabla}_{b^3}\overline{\nabla}_{b^3}u-\nabla_{X_3}v-S(X_3,v)b^3\ .
\]
Let us first set
\begin{align*}
  T_1=&b^3(\alpha_1)\theta_z^1+
  b^3(\alpha_2)\theta_z^2&
  T_2= & \gamma_3\, KX_3-T_1\\  
 E_0=&\flat X_3\otimes\flat X_3  &
  E_1=&E_0+b^3(\gamma_3)K-
  \gamma_3^2g\ .
\end{align*}

\begin{lemma} Let  $\overline{\nabla}_{b^3}u$ be given in a neighborhood of $M$ by \eqref{dub3}. 
Then $\overline{\nabla}_{b^3}\overline{\nabla}_{b^3}u$ evaluated on $M$ is given by
\begin{align*}
\overline{\nabla}_{b^3}\overline{\nabla}_{b^3}u=& \Big( \overline{\nabla}\, \overline{\nabla}u^1\ b^3\otimes b^3+X_3(v^1) \Big)b^1+
  \Big( \overline{\nabla}\, \overline{\nabla}u^2\ b^3\otimes b^3 +X_3(v^2)\Big)b^2+
  2\gamma_3Kq+\\
  &2\rho X_3+E_1v+
 \Big(\overline{\nabla}\, \overline{\nabla}u^3\ b^3\otimes b^3+T_2v-2g(X_3,q)\Big)b^3\ .
\end{align*}
\end{lemma}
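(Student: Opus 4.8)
The plan is to obtain $\overline{\nabla}_{b^3}\overline{\nabla}_{b^3}u$ by differentiating the neighborhood formula \eqref{dub3} once more in the direction $b^3$. First I would write \eqref{dub3} in the frame as $\overline{\nabla}_{b^3}u=A^1b^1+A^2b^2+A^3b^3$ with
\[
 A^1=b^3(u^1)+\alpha_1u^3+\gamma_3u^2,\quad
 A^2=b^3(u^2)+\alpha_2u^3-\gamma_3u^1,\quad
 A^3=\rho,
\]
valid in the whole neighborhood of $M$. Applying $\overline{\nabla}_{b^3}$ and using the Leibniz rule gives $\overline{\nabla}_{b^3}\overline{\nabla}_{b^3}u=\sum_j\big(b^3(A^j)b^j+A^j\,\overline{\nabla}_{b^3}b^j\big)$, so the only geometric input needed is $\overline{\nabla}_{b^3}b^j=\sum_i\omega_{ij}(b^3)b^i$ from \eqref{kova}; with the notation \eqref{merk} this reads $\overline{\nabla}_{b^3}b^1=-\gamma_3b^2-\alpha_1b^3$, $\overline{\nabla}_{b^3}b^2=\gamma_3b^1-\alpha_2b^3$ and $\overline{\nabla}_{b^3}b^3=X_3$. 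Collecting coefficients then yields the three components of $\overline{\nabla}_{b^3}\overline{\nabla}_{b^3}u$ as $b^3(A^1)+\gamma_3A^2+\alpha_1A^3$, $b^3(A^2)-\gamma_3A^1+\alpha_2A^3$ and $b^3(A^3)-\alpha_1A^1-\alpha_2A^2$.

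Two reductions then bring this raw expression to the stated form. The first is that a bare second normal derivative of a coefficient function is not intrinsic and should be recorded as a Hessian: since $\overline{\nabla}_{b^3}b^3=X_3$, for any function $f$ one has $b^3(b^3(f))=\overline{\nabla}\,\overline{\nabla}f\,b^3\otimes b^3+X_3(f)$, which is how the terms $\overline{\nabla}\,\overline{\nabla}u^k\,b^3\otimes b^3$ and the drift terms $X_3(v^k)$ appear. The second reduction is the evaluation on $M$: there $u^3=0$, while its normal derivative survives and is replaced through the definition of $\rho$ by $b^3(u^3)=\rho+g_0(X_3,u)$; moreover, because $X_3=\alpha_1b_z^1+\alpha_2b_z^2$ is tangent along $M$, I may replace $X_3(u^k)$ by $X_3(v^k)$ for $k=1,2$ and note that $X_3(u^3)$ vanishes on $M$ as the derivative of a function that is identically zero there.

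After these substitutions I would simply gather terms by order. The undifferentiated-$v$ contributions assemble into $E_1v$: the rank-one piece $\alpha_i\alpha_jv^j$ is exactly $E_0v=\flat X_3\otimes\flat X_3\,(v)$, the skew piece coming from $b^3(\gamma_3)$ is $b^3(\gamma_3)Kv$, and the terms $-\gamma_3^2v^k$ give $-\gamma_3^2gv$. The first-order tangential contributions combine as $2\gamma_3Kq$, from the two places where $\gamma_3$ meets $q$, and the $\rho$-terms double up to $2\rho X_3$; in the normal slot the pairing $\alpha_ib^3(u^i)$ is $g(X_3,q)$, again doubled, and the remaining zeroth-order normal terms $-b^3(\alpha_i)v^i$ together with $\gamma_3(\alpha_2v^1-\alpha_1v^2)=\gamma_3\,g(KX_3,v)$ are precisely the definition $T_2=\gamma_3KX_3-T_1$ applied to $v$.

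The main obstacle is purely bookkeeping: one must keep rigorous track of which quantities vanish on $M$ and which survive. In particular $u^3$ vanishes on $M$ but its normal derivative $b^3(u^3)$ does not, so the second-order term $b^3(b^3(u^3))$ must be split into a genuine normal Hessian plus $X_3(u^3)$ before the latter can be discarded; conversely the first normal derivatives $b^3(u^1)$ and $b^3(u^2)$ must be retained throughout, since they are exactly the components of $q$. Verifying that the various $\alpha$- and $\gamma$-contributions coalesce into the compact tensors $E_1$ and $T_2$, rather than being left as a long list of monomials, is the only genuinely delicate point, and it is where the definitions of $T_1$, $T_2$, $E_0$ and $E_1$ have been engineered precisely to absorb those terms.
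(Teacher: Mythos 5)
Your proposal is correct and follows essentially the same route as the paper: a direct Leibniz-rule differentiation of \eqref{dub3} using $\overline{\nabla}_{b^3}b^j=\sum_i\omega_{ij}(b^3)b^i$, the identity $b^3(b^3(f))=\overline{\nabla}\,\overline{\nabla}f\,b^3\otimes b^3+X_3(f)$, and the substitutions $b^3(u^3)=\rho+g_0(X_3,u)$ and $u^3=0$, $X_3(u^3)=0$ on $M$. The only difference is organizational — you expand $\overline{\nabla}_{b^3}u$ into scalar frame components before differentiating, whereas the paper differentiates the four grouped terms $q$, $u^3X_3$, $\gamma_3Kv$-type and $\rho b^3$ separately — and both bookkeepings assemble into the same tensors $E_1$ and $T_2$.
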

\begin{proof}  We compute
\[
 \overline{\nabla}_{b^3}b^3(u^1)b^1=
  b^3(u^1) \overline{\nabla}_{b^3} b^1+
  b^3\big( b^3(u^1)\big)b^1=
- \gamma_3b^3(u^1)b^2 - \alpha_1 b^3(u^1)b^3+
  \Big( \overline{\nabla}\, \overline{\nabla}u^1\ b^3\otimes b^3+X_3(v^1)\Big)b^1\ .
\]
Computing similarly with $ \overline{\nabla}_{b^3}b^3(u^2)b^2$ and adding gives 
\[
\overline{\nabla}_{b^3}q=
\Big( \overline{\nabla}\, \overline{\nabla}u^1\ b^3\otimes b^3+X_3(v^1) \Big)b^1+
  \Big( \overline{\nabla}\, \overline{\nabla}u^2\ b^3\otimes b^3 +X_3(v^2)\Big)b^2+
  \gamma_3Kq-g(X_3,q)b^3\ .
\]
 Then $\overline{\nabla}_{b^3}u^3X_3=b^3(u^3)X_3=\rho X_3+E_0v$ because we evaluate on $M$. Next we have 
\begin{align*}
   \overline{\nabla}_{b^3}\gamma_3(u^2b^1-u^1b^2)=&b^3(\gamma_3)Kv+
  \gamma_3\big(  Kq+u^2 \overline{\nabla}_{b^3}b^1-u^1 \overline{\nabla}_{b^3}b^2\big)\\ =&\gamma_3 Kq+b^3(\gamma_3)Kv-\gamma_3^2v+ \gamma_3 g(KX_3,v)b^3\ .
\end{align*}
Finally we compute
\begin{equation}
\begin{aligned}
\overline{\nabla}_{b^3}\rho b^3=&
  b^3(\rho)b^3+\rho X_3
  \quad\mathrm{where}\\
  b^3(\rho)=&\overline{\nabla}\, \overline{\nabla}u^3\ b^3\otimes b^3-
   b^3(\alpha_1)v^1-
  b^3(\alpha_2)v^2-g(X_3,q)\\
  =&\overline{\nabla}\, \overline{\nabla}u^3\ b^3\otimes b^3-
      T_1v-g(X_3,q)     \ .
\end{aligned}
\label{tekn-kaava}
\end{equation}
\end{proof}

This gives immediately
\begin{lemma}

Putting $E_2=E_1-g(X_3,w)K$ we have
\begin{align*}
\overline{\nabla}\,\overline{\nabla} u\,
 b^3\otimes b^3=& \Big( \overline{\nabla}\, \overline{\nabla}u^1\ b^3\otimes b^3 \Big)b^1+
  \Big( \overline{\nabla}\, \overline{\nabla}u^2\ b^3\otimes b^3 \Big)b^2+
  2\gamma_3Kq+\\
  &2\rho X_3+E_2v+
 \Big(\overline{\nabla}\, \overline{\nabla}u^3\ b^3\otimes b^3+T_2v-2g(X_3,q)-S(X_3,v)\Big)b^3\ .
\end{align*}
\label{normaali-lemma}
\end{lemma}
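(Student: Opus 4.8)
The plan is to read off the result from the displayed identity
\[
\overline{\nabla}\,\overline{\nabla} u\, b^3\otimes b^3=\overline{\nabla}_{b^3}\overline{\nabla}_{b^3}u-\nabla_{X_3}v-S(X_3,v)b^3
\]
stated just above the Lemma, by substituting the expression for $\overline{\nabla}_{b^3}\overline{\nabla}_{b^3}u$ obtained in the preceding Lemma. Since that Lemma has already done the heavy lifting, the only genuinely new ingredient is the intrinsic tangential term $\nabla_{X_3}v$; once it is computed, the remaining steps are pure bookkeeping, consistent with the paper's remark that the statement follows \emph{immediately}.

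First I would compute $\nabla_{X_3}v$ on $M$ using the two-dimensional analogue of \eqref{kova}. Writing $X_3=\alpha_1 b_z^1+\alpha_2 b_z^2$ and $v=v^1 b_z^1+v^2 b_z^2$, and using that the induced connection form satisfies $\omega_{12}^z(b_z^j)=\gamma_j$ on $M$ (so that the connection coefficients are encoded by $w=\gamma_1 b_z^1+\gamma_2 b_z^2$), I expect to obtain
\[
\nabla_{X_3}v=X_3(v^1)b_z^1+X_3(v^2)b_z^2+g(X_3,w)\,Kv,
\]
where the last term comes from $\nabla_{b_z^k}b_z^j$ together with the identification $Kv=v^2 b_z^1-v^1 b_z^2$.

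Then I would subtract term by term. The contributions $X_3(v^1)b_z^1$ and $X_3(v^2)b_z^2$ cancel exactly against the matching pieces in the $b^1$ and $b^2$ components of $\overline{\nabla}_{b^3}\overline{\nabla}_{b^3}u$, leaving only the pure normal second derivatives $\overline{\nabla}\,\overline{\nabla}u^1\,b^3\otimes b^3$ and $\overline{\nabla}\,\overline{\nabla}u^2\,b^3\otimes b^3$ in those slots, while the terms $2\gamma_3 Kq$ and $2\rho X_3$ are carried over unchanged. The surviving correction $-g(X_3,w)Kv$ combines with the $E_1 v$ already present to give $(E_1-g(X_3,w)K)v=E_2 v$, which is precisely the definition of $E_2$. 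Finally the term $-S(X_3,v)b^3$ is appended to the normal component, producing the stated $b^3$-coefficient $\overline{\nabla}\,\overline{\nabla}u^3\,b^3\otimes b^3+T_2 v-2g(X_3,q)-S(X_3,v)$.

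The main (and essentially only) obstacle is getting $\nabla_{X_3}v$ right: one must correctly identify the $g(X_3,w)K$ contribution and verify that its sign is exactly what collapses $E_1$ to $E_2$. Everything else is routine reassembly of the components from the previous Lemma.
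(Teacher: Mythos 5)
Your proposal is correct and follows exactly the paper's route: the paper's entire proof consists of verifying the single identity $X_3(v^1)b^1+X_3(v^2)b^2-\nabla_{X_3}v=-g(X_3,w)Kv$, which is precisely the computation of $\nabla_{X_3}v$ (with the correct $g(X_3,w)Kv$ correction and sign) that you identify as the only nontrivial step. The rest of your reassembly of the components from the preceding Lemma matches the paper's bookkeeping.
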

\begin{proof}
One has to check that 
\[
   X_3(v^1)b^1+X_3(v^2)b^2-\nabla_{X_3}v=
   -g(X_3,w)Kv\ .
\]
\end{proof}

This gives
\begin{theorem}
Let us write $\Delta_Bu=B_t+B_nb^3$ where $B_t$ is the tangential component and let  $T=T_0+T_2$, 
\[
  N=2\gamma_3K-2\mathcal{H}\,g
  \quad\mathrm{and}\quad
  E=E_2-2\mathcal{H}\,\gamma_3K\ .
\]
Then we have
\begin{equation}
\begin{aligned}
  B_t=&\Delta_Bv+\kappa\,v-2\mathcal{H}\,Sv  +Ev+2\rho X_3+\\
  &\Big( \overline{\nabla}\, \overline{\nabla}u^1\ b^3\otimes b^3 \Big)b^1+
  \Big( \overline{\nabla}\, \overline{\nabla}u^2\ b^3\otimes b^3 \Big)b^2+Nq\\
  B_n=&2\,\sigma+2S(w,Kv)-S(X_3,v)+Tv+ \overline{\nabla}\, \overline{\nabla}u^3\ b^3\otimes b^3    -2\,g(X_3,q)-2\mathcal{H}\,\rho \ .
\end{aligned}
\label{yl-kaavat}
\end{equation}
In the compatible case we have $\rho=0$.
\label{lause}
\end{theorem}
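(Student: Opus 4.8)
The plan is to obtain the theorem purely by assembling the four pieces of the fundamental splitting
\[
\Delta_Bu=\overline{\nabla}_{b^1}\overline{\nabla}_{b^1}u+\overline{\nabla}_{b^2}\overline{\nabla}_{b^2}u-\overline{\nabla}_{X}u+\overline{\nabla}\,\overline{\nabla} u\,b^3\otimes b^3
\]
recorded at the opening of this section. The first three terms, evaluated on $M$, have already been combined in \eqref{lemmat12}, and the last term is exactly the content of Lemma \ref{normaali-lemma}. So I would simply add the right-hand side of \eqref{lemmat12} to the expression of Lemma \ref{normaali-lemma} and then separate the sum into its component along $b^1,b^2$ (this is $B_t$) and its component along $b^3$ (this is $B_n$).

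For the tangential part I would first note that the genuinely intrinsic piece $\Delta_Bv+\kappa\,v-2\mathcal{H}\,Sv$, the two extrinsic normal-derivative terms $(\overline{\nabla}\,\overline{\nabla}u^1\,b^3\otimes b^3)b^1+(\overline{\nabla}\,\overline{\nabla}u^2\,b^3\otimes b^3)b^2$, and the term $2\rho X_3$ carry over unchanged. The only grouping needed is for the $q$-terms and the $v$-terms: the $q$-contributions $-2\mathcal{H}\,q$ (from \eqref{lemmat12}) and $2\gamma_3Kq$ (from Lemma \ref{normaali-lemma}) combine into $(2\gamma_3K-2\mathcal{H}\,g)q=Nq$, using that $g$ acts as the identity on vectors, while the linear-in-$v$ contributions $-2\mathcal{H}\gamma_3Kv$ and $E_2v$ combine into $(E_2-2\mathcal{H}\gamma_3K)v=Ev$. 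This reproduces the displayed formula for $B_t$.

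For the normal part the assembly is even more direct: $2\sigma+2S(w,Kv)-2\mathcal{H}\,\rho$ from \eqref{lemmat12} and $\overline{\nabla}\,\overline{\nabla}u^3\,b^3\otimes b^3-2g(X_3,q)-S(X_3,v)$ from Lemma \ref{normaali-lemma} transfer verbatim, and the sole grouping is of the two linear terms $T_0v$ and $T_2v$ into $(T_0+T_2)v=Tv$. The final sentence of the statement then follows because, by the definition of $\rho$ and Theorem \ref{div-lause}, the compatible condition \eqref{ehto} forces $\rho=0$ on $M$, eliminating the $2\rho X_3$ and $-2\mathcal{H}\,\rho$ terms.

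Since every substantive computation has already been discharged in the preceding lemmas, the only real obstacle is bookkeeping: recognizing that the assorted coefficients coalesce into precisely the three tensors $N$, $E$ and $T$ introduced in the statement. The step most prone to error is the $v$-linear tangential cancellation, where one must make sure the $-2\mathcal{H}\gamma_3K$ coming from the $\gamma_3Kv$ term in the $\overline{\nabla}_X u$ computation is absorbed into $E_2$ to give $E$, rather than being double-counted against the $\gamma_3Kq$ contribution that feeds $N$. No further geometric identity is required.
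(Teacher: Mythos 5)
Your proposal is correct and follows exactly the paper's own argument: the proof of Theorem \ref{lause} is precisely the addition of formula \eqref{lemmat12} to Lemma \ref{normaali-lemma}, and your bookkeeping of the $q$-terms into $N$, the $v$-linear terms into $E$, and $T_0+T_2$ into $T$ checks out. Nothing further is needed.
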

Note that the sign of $S$ and $\mathcal{H}$ depend on the choice of orientation of $b^3$, but the sign of $B_nb^3$ does not depend on this choice. 

\begin{proof}
We simply combine the formula \eqref{lemmat12} and Lemma \ref{normaali-lemma}. 
\end{proof}

In the divergence free case this yields
\begin{theorem}
Let us set $\ell=b^3(\gamma_1)b_z^1+b^3(\gamma_2)b_z^2$.
 If $\mathsf{div}(u)=0$ in the neighborhood of $M$ we obtain
\begin{equation}
\begin{aligned}
  B_t=&\Delta_Bv+\kappa\,v-2\mathcal{H}\,Sv  +Ev-2\,\mathsf{div}(v) X_3+\\
  &\Big( \overline{\nabla}\, \overline{\nabla}u^1\ b^3\otimes b^3 \Big)b^1+
  \Big( \overline{\nabla}\, \overline{\nabla}u^2\ b^3\otimes b^3 \Big)b^2+Nq\\
  B_n=&2\,\sigma+\gamma_3 \big(b_z^2(v^1)-b_z^1(v^2)\big)+
  2S(w,Kv)+S_{\mathsf{adj}}(X_3,v)+T_{\mathsf{div}}v\\
&  -  \overline{\nabla}\,\overline{\nabla} u^1\,(  b^1\otimes b^3)-
\overline{\nabla}\,\overline{\nabla} u^2\,(  b^2\otimes b^3)
 +g(Kw,q)
\end{aligned}
\label{div0-deko}
\end{equation}
where 
\[
   T_{\mathsf{div}}=T_0+K\big(\ell+
   \gamma_3X_3\big)\ .
\]
 In the compatible case one just substitutes $\mathsf{div}(v)=0$ to $B_t$ and for $B_n$ one obtains
 \begin{align*}
   B_n=& 2\hat\sigma+\gamma_3\big(b^2(v^1)-b^1(v^2)\big)
+  2S(w,Kv)+S_{\mathsf{adj}}(X_3,v)+T_{\mathsf{div,c}}v\\
&  -  \overline{\nabla}\,\overline{\nabla} u^1\,(  b^1\otimes b^3)-
\overline{\nabla}\,\overline{\nabla} u^2\,(  b^2\otimes b^3)
 +g(Kw,q)
 \end{align*}
 where 
\begin{align*}
\hat \sigma=&(t_{22}-t_{11})b^2(v^2)
+t_{12}\big(b^2(v^1)
+b^1(v^2)\big)
\quad\mathrm{and}\\
   T_{\mathsf{div,c}}=&T_0+K\big(\ell+
   \gamma_3X_3+2t_{11}w\big)\ .
\end{align*}

\label{lause-div0}
\end{theorem}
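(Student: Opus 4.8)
The plan is to derive both displays in \eqref{div0-deko} directly from the general decomposition \eqref{yl-kaavat} of Theorem \ref{lause}, using the hypothesis $\mathsf{div}(u)=0$ only to rewrite the two pieces of \eqref{yl-kaavat} that are not yet intrinsic to $M$: the term $2\rho X_3$ in $B_t$ and the pure normal Hessian $\overline{\nabla}\,\overline{\nabla}u^3\,b^3\otimes b^3$ in $B_n$. Everything else in \eqref{yl-kaavat} is carried over unchanged.

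First I would evaluate the constraint on $M$. Rewriting $\mathsf{div}(u)$ as in the proof of Theorem \ref{div-lause} and using $\rho=b^3(u^3)-\alpha_1u^1-\alpha_2u^2$ gives $\mathsf{div}(u)|_M=\mathsf{div}(v)+\rho$, so $\mathsf{div}(u)=0$ forces $\rho=-\mathsf{div}(v)$ on $M$; hence $2\rho X_3=-2\,\mathsf{div}(v)\,X_3$, which is already the stated $B_t$. For $B_n$ the substantive step is to eliminate $\overline{\nabla}\,\overline{\nabla}u^3\,b^3\otimes b^3$. Since $u^3\equiv0$ on $M$ its tangential derivatives vanish there, so this Hessian reduces on $M$ to $b^3\big(b^3(u^3)\big)$. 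Now \eqref{div0} reads $b^3(u^3)-2\mathcal{H}\,u^3=F$ in the neighborhood, with $F=(\gamma_2+\alpha_1)u^1-(\gamma_1-\alpha_2)u^2-b^1(u^1)-b^2(u^2)$; differentiating this along $b^3$ and evaluating on $M$ yields $b^3\big(b^3(u^3)\big)=b^3(F)+2\mathcal{H}\,b^3(u^3)$. Expanding $b^3(F)$ and converting each iterated derivative $b^3\big(b^j(u^j)\big)$ into the symmetric Hessian $\overline{\nabla}\,\overline{\nabla}u^j\,(b^j\otimes b^3)$ plus the connection correction $\big(\overline{\nabla}_{b^3}b^j\big)(u^j)$ read off from \eqref{kova} is what produces the two mixed normal Hessians (with their minus signs) together with the first-order term $\gamma_3\big(b_z^2(v^1)-b_z^1(v^2)\big)$.

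The remaining work is bookkeeping: collecting the lower-order debris from $b^3(F)$ and from $2\mathcal{H}\,b^3(u^3)$ and recombining it with the terms already in \eqref{yl-kaavat}. Four clean identifications drive the result. The $\mathsf{div}(v)$ contributions coming from $2\mathcal{H}\,b^3(u^3)$ and from the $-2\mathcal{H}\rho$ already present cancel. The $X_3$-terms combine through the matrix identity $2\mathcal{H}\,g-S=S_{\mathsf{adj}}$ into $S_{\mathsf{adj}}(X_3,v)$. The normal derivatives $b^3(\gamma_j),b^3(\alpha_j)$ assemble, using $T=T_0+T_2$ and $T_2=\gamma_3KX_3-T_1$, into $T_{\mathsf{div}}=T_0+K(\ell+\gamma_3X_3)$. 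Finally the $q$-terms, after absorbing the original $-2\,g(X_3,q)$, reduce to $g(Kw,q)$. This gives the first $B_n$. For the compatible case one substitutes $\mathsf{div}(v)=0$, i.e. $b_z^1(v^1)+b_z^2(v^2)=\gamma_2v^1-\gamma_1v^2$, into $\sigma$; this rewrites $\sigma=\hat\sigma+t_{11}(\gamma_2v^1-\gamma_1v^2)$, and the extra piece is exactly absorbed by the replacement $T_{\mathsf{div}}\mapsto T_{\mathsf{div,c}}=T_{\mathsf{div}}+2t_{11}Kw$, so the two $B_n$ expressions agree once $\mathsf{div}(v)=0$.

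I expect the main obstacle to be not any single identity but the sheer reorganization of the first- and zeroth-order terms into the compact tensors $T_{\mathsf{div}}$, $S_{\mathsf{adj}}(X_3,\cdot)$ and $g(Kw,q)$. The one place that demands genuine care is the conversion of the iterated frame derivatives $b^3\big(b^j(u^j)\big)$ into symmetric Hessians: the connection corrections from \eqref{kova} must be tracked exactly, because it is precisely these corrections that generate the $\gamma_3$-curl term and the hidden $g(X_3,q)$ needed for the cancellation against $-2\,g(X_3,q)$. Keeping the notations \eqref{merk} for $\alpha_j,\gamma_j$ straight throughout is what makes the final collapse into the stated formulas transparent.
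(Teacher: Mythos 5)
Your proposal follows essentially the same route as the paper's own proof: substitute $\rho=-\mathsf{div}(v)$ on $M$ into \eqref{yl-kaavat} for $B_t$, replace $\overline{\nabla}\,\overline{\nabla}u^3\,b^3\otimes b^3=b^3\big(b^3(u^3)\big)$ by differentiating the divergence constraint along $b^3$, convert the iterated derivatives into mixed Hessians plus connection corrections, and collect the debris into $S_{\mathsf{adj}}(X_3,v)$, $T_{\mathsf{div}}v$ and $g(Kw,q)$, with the compatible case handled by rewriting $\sigma=\hat\sigma+t_{11}g(Kw,v)$. All the key cancellations you identify (the two $2\mathcal{H}\rho$ terms, the $g(X_3,q)$ terms against $-2g(X_3,q)$, and the $T_1$ terms inside $T=T_0+T_2$) are exactly the ones the paper uses, so the argument is correct and not materially different.
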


\begin{proof}
Recall that if $\mathsf{div}(u)=0$ then
\[
  b^3(u^3)=-b^1(u^1)-b^2(u^2)+(t_{11}+t_{22})u^3+
 \alpha_1u^1+\alpha_2u^2+\gamma_2u^1-\gamma_1u^2\ .
\]
Hence on $M$ we have $
  \rho X_3=-\mathsf{div}(v)X_3$ 
which gives the formula for $B_t$. Then we have to compute 
\[
\overline{\nabla}\, \overline{\nabla}u^3\ b^3\otimes b^3=b^3(b^3(u^3))=
b^3\Big(-b^1(u^1)-b^2(u^2)+(t_{11}+t_{22})u^3+
  \alpha_1u^1+\alpha_2u^2+\gamma_2u^1-\gamma_1u^2\Big)\ .
\]
We compute first
\begin{align*}
-b^3\big(b^1(u^1)+b^2(u^2)\big)=&
-\overline{\nabla}\,\overline{\nabla} u^1\,(  b^1\otimes b^3)-
\overline{\nabla}\,\overline{\nabla} u^2\,(  b^2\otimes b^3)+
\gamma_3\,\big(b_z^2(v^1)-b_z^1(v^2)\big)+g(X_3,q)\\
b^3\big((t_{11}+t_{22})u^3\big)=&
 2\mathcal{H}\, b^3(u^3)=2\mathcal{H}\,(\rho+g(X_3,v))\ .
\end{align*}
Note that $2\mathcal{H}\, \rho$ cancels directly the corresponding term in \eqref{yl-kaavat} and moreover
\[
2\mathcal{H}\,g(X_3,v)-S(X_3,v)=
S_{\mathsf{adj}}(X_3,v)\ .
\]
Then using \eqref{tekn-kaava} we have
\[
 b^3(\alpha_1u^1+\alpha_2u^2)=  g(X_3,q)+T_1v\ .
 \]
 Finally
 \[
 b^3(\gamma_2 u^1-\gamma_1 u^2)=
     g(Kw,q)+g(K\ell,v)\ .
 \]
If $\mathsf{div}(v)=0$ we can solve
\[
   b^1(v^1)=g(Kw,v)-b^2(v^2)
\]
which gives
\[
\sigma=(t_{22}-t_{11})b^2(v^2)
+t_{12}\big(b^2(v^1)
+b^1(v^2)\big)+t_{11}g(Kw,v)\ .
\]
\end{proof}

In \cite[Theorem 1.1]{chczyo} $\flat B_t$ was computed for the problem \eqref{esim} in the divergence free and compatible case.

There are several interesting points about the formulas  in above Theorems.  For definiteness let us concentrate on Theorem \ref{lause-div0}, but  similar remarks of course apply to formulas in Theorem \ref{lause}. First it is clear that $B_t$ can be rather arbitrary. For example one may view 
\begin{align*}
& \overline{\nabla}\, \overline{\nabla}u^1\ b^3\otimes b^3 -2\mathcal{H}\,b^3(u^1)+
  2\gamma_3b^3(u^2)=h_1\\
  & \overline{\nabla}\, \overline{\nabla}u^2\ b^3\otimes b^3 
  -2\mathcal{H}\,b^3(u^2)
  -2\gamma_3b^3(u^1)=h_2
\end{align*}
as a system of PDE for $u^1$ and $u^2$ in the neighborhood of $M$ and solve it, given some $h_j$ and some appropriate boundary conditions. Even on a more elementary level it is clear that choosing extensions appropriately one can make the terms $\overline{\nabla}\, \overline{\nabla}u^j\ b^3\otimes b^3$ arbitrarily big compared to $\Delta_B v$.  So the overall conclusion is that  $\pi(\Delta_B u)$ and $\Delta_B v$ are unrelated in general. To have a meaningful relationship the normal derivatives of $u^j$ should be ''small'', in other words only the first lines in the formulas for $B_t$ and $B_n$ in \eqref{div0-deko} should be ''big''. In this case we could write
\begin{align*}
 B_t\approx&\,\Delta_Bv+\kappa\,v-2\mathcal{H}\,Sv+Ev
 -2 \mathsf{div}(v)X_3\\
  B_n\approx&\,2\,\sigma+\gamma_3  \big(b_z^2(v^1)-b_z^1(v^2)\big)+
  2S(w,Kv)+S_{\mathsf{adj}}(X_3,v)+T_{\mathsf{div}}v\ .
\end{align*}
The symmetric Laplacian $Lv=\Delta_B v+\kappa\,v$ (for divergence free $v$) appears in some sense naturally in the formula of $B_t$. Note that the operator $L$ is intrinsically defined. The tensor $2\mathcal{H}\,S$ depends on the embedding, but this is perfectly fine in the present context;  the important thing is that it does not depend on the chosen frame. The presence of mean curvature in the formulas is  intriguing. It  suggests that minimal surfaces are somehow special here, but it is not clear how this should or could be interpreted from the point of view of physical model.

However, unfortunately the tensor $E$ does  depend on the chosen frame. Let us simply illustrate the difficulties without resolving them. 
Let us   choose $a=1$ in \eqref{esim} and compare our computations to the ones in \cite{chczdi}. In this case we have $\kappa=1$ and $X_3=\gamma_3=0$. Choosing further that $u^3$ is identically zero and $u^1$ and $u^2$ do not depend on $y_3$ one obtains
\[
  \pi(\Delta_B u)=
  \Delta_Bv+\kappa v-2\mathcal{H}\,S v=
  \Delta_B v-v=\Delta_H v\ .
\]
In \cite[p. 344]{chczdi} the authors obtained using a  similar extension $\hat u=u^1\partial_{y_1}+u^2\partial_{y_2}$  that 
\[
    \pi(\Delta_B \hat u)=\Delta_B \hat v+\hat v=L \hat v\ .
\]
The authors considered the divergence free case, but one obtains the same result also in general, because the extensions considered here are in any case compatible. 
The above results are not  contradictory because in our case $u=u^1b^1+u^2b^2$ and $b^j$ do depend on $y_3$ although $u^j$ do not. Also the frame $\{\partial_{y_1},\partial_{y_2},\partial_{y_3}\}$, used in \cite{chczdi,chczyo}, is not orthonormal. However, this illustrates clearly how seemingly minor differences in the extension or the frame can have dramatic consequences for the restriction even in this simple case.

Let us then give an example how the tensor $E$ depends on the frame even in the case that the frames are orthonormal. 
  Writing $E$ explicitly gives
\[
E= \flat X_3\otimes\flat X_3 -
  \gamma_3^2g
   +\big(b^3(\gamma_3)-2\mathcal{H}\,\gamma_3-g(X_3,w) \big)K
\ .
\]
Note that $w$ is in fact well-defined: one way to see this is to observe that  $w=[b^1_z,b^2_z]$. The problem is with $\gamma_3$ and $X_3$.
Let us then for  the final time consider the problem \eqref{esim}.  With the frame given in \eqref{kehys} we have $\gamma_3=g(X_3,w)=0$ and 
\[
   E= \flat X_3\otimes\flat X_3=
   \frac{(1-a^2)^2\sin^2(2z_2)}{4\lambda^6}\,\theta_z^2\otimes \theta_z^2\ .
\] 
Let us then choose  the following frame:
\begin{align*}
  \tilde b^1=&\frac{1}{\sqrt{\mu_0}}\,\partial_{y_1}+
  \frac{y_3-1}{\sqrt{\mu_0}}\,\partial_{y_3}\\
   \tilde b^2=&\frac{(1-a^2)(y_3-1)\sin(2y_2)}{2a\sqrt{\mu_0\mu_1}}\,\partial_{y_1}+
  \frac{\sqrt{\mu_0}}{ay_3\sqrt{\mu_1}}\,\partial_{y_2}+
  \frac{(1-a^2)(y_3-1)^2\sin(2y_2)}{2a\sqrt{\mu_0\mu_1}}\,\partial_{y_3}\\
  \tilde b^3=&-\frac{y_3-1}{ay_3\sin(y_2)\sqrt{\mu_1}}\,\partial_{y_1}+
  \frac{(1-a^2)\sin(2y_2)\sin(y_2)}{2a\sqrt{\mu_1}}\,\partial_{y_2}+
  \frac{y_3\lambda^2\sin(y_2)}{a\sqrt{\mu_1}}\,\partial_{y_3}\\
  \mu_0=&(y_3-1)^2\hat\lambda^2+
  a^2y_3^2\sin^2(y_2)\\
  \mu_1=&(y_3-1)^2+y_3^2\lambda^2\sin^2(y_2)
\end{align*}
Note that evaluated on $M$  we have $\tilde b^j=b^j$  so that $b^j_z$ and hence $\theta^j_z$ are the same as before. However, when we simply compute we obtain that 
\[
\gamma_3=\frac{(1-a^2)\cos(z_2)}{a^2}
\quad\mathrm{and}\quad
\alpha_1\gamma_1+\alpha_2\gamma_2=
g(X_3,w)=
-\frac{\cos(z_2)}{a\lambda\sin^2(z_2)}
\]
when we evaluate on $M$ and further
\begin{align*}
  E=&\frac{\lambda^4-(1+a^2)\lambda^2+2a^2}{a^4\sin^2(z_2)}\,\theta_z^1\otimes \theta_z^1+
  \frac{(a^2-1)\cos(z_2)(3\lambda^2-a^2)}{a^3\lambda}\, \theta_z^1\otimes \theta_z^2+\\
& \frac{(a^2-1)\cos(z_2)(2a^2+\lambda^2a^2-3\lambda^4)}{a^3\lambda^3}\, \theta_z^2\otimes \theta_z^1+
 \frac{(\lambda^2-1)\big((1-a^2)\lambda^6-a^4\lambda^2+a^6\big)}{a^4\lambda^6}\, \theta_z^2\otimes \theta_z^2\ .
\end{align*}
Similarly the normal component $B_nb^3$ depends on the chosen frame. Incidentally it seems that the properties of the normal component has not been analyzed previously. One would think that the normal component is also important when assessing the relationship between the three dimensional flow and its restriction. 

In conclusion we always have
\[
   \pi\big(\Delta_Bu\big)=
   \Delta_Bv+\kappa v-2\mathcal{H}\,Sv+
   \ \mathrm{other\ terms}
\]
where the other terms depend on the extension of $v$ and the frame. While it remains unclear how this frame dependence might be resolved, Theorems \ref{lause} and \ref{lause-div0} provide general and convenient  formulas for exploring this problem further.

\printbibliography

\end{document}